\theoremstyle{plain}
\newtheorem{theorem}{Theorem}[section]
\newtheorem{lemma}[theorem]{Lemma}
\newtheorem{corollary}[theorem]{Corollary}
\newtheorem{proposition}[theorem]{Proposition}
\DeclareMathOperator{\R}{\mathbb{R}}
\newcommand{\calP}{\ensuremath{\mathcal{P}}}
\newcommand{\calQ}{\ensuremath{\mathcal{Q}}}
\author{Jean Cardinal\affiliationmark{1}
  \and Michael S. Payne\affiliationmark{2}
  \and Noam Solomon\affiliationmark{3}}
\title[Ramsey-Type Theorems for Lines in 3-space]{Ramsey-Type Theorems for Lines in 3-space}
\affiliation{
  Universit\'e libre de Bruxelles (ULB), Belgium\\
  Monash University, Melbourne, Australia\\
  Tel-Aviv University, Israel}
\keywords{Geometric Ramsey theory, Erd\H{o}s-Hajnal property, incidence bounds}
\begin{document}
\maketitle
\sloppy
\begin{abstract}
We prove geometric Ramsey-type statements on collections of lines in 3-space. These statements give guarantees on the size of a clique or an independent set in (hyper)graphs induced by incidence relations between lines, points, and reguli in 3-space. Among other things, we prove the following:
\begin{itemize}
\item The intersection graph of $n$ lines in $\R^3$ has a clique or independent set of size $\Omega(n^{1/3})$.
\item Every set of $n$ lines in $\R^3$ has a subset of $\sqrt n$ lines that are all stabbed by one line, or a subset of $\Omega\left(\left(n/\log n\right)^{1/5}\right)$ such that no $6$-subset is stabbed by one line.
\item Every set of $n$ lines in general position in $\R^3$ has a subset of $\Omega(n^{2/3})$ lines that all lie on a regulus, or a subset of $\Omega(n^{1/3})$ lines such that no $4$-subset is contained in a regulus.
\end{itemize}
The proofs of these statements all follow from geometric incidence bounds -- such as the Guth-Katz bound on point-line incidences in $\R^3$ -- combined with Tur\'an-type results on independent sets in sparse graphs and hypergraphs. As an intermediate step towards the third result, we also show that for a fixed family of plane algebraic curves with $s$ degrees of freedom, every set of $n$ points in the plane has a subset of $\Omega (n^{1-1/s})$ points incident to a single curve, or a subset of $\Omega (n^{1/s})$ points such that at most $s$ of them lie on a curve.
Although similar Ramsey-type statements can be proved using existing generic algebraic frameworks, the lower bounds we get are much larger than what can be obtained with these methods.
The proofs directly yield polynomial-time algorithms for finding subsets of the claimed size. 
\end{abstract}

\section{Introduction}

Ramsey theory studies the 
conditions under which particular discrete structures must contain certain substructures.
Ramsey's theorem says that for every $n$, every sufficiently large graph has either a clique or an independent set of size $n$. Early geometric Ramsey-type statements include the Happy Ending Problem on convex quadrilaterals in planar point sets, and the Erd\H{o}s-Szekeres Theorem on subsets in convex position~\cite{ES35}.

We prove a number of Ramsey-type statements involving lines in $\R^3$. The combinatorics of lines in space is a widely studied topic which arises in many applications such as computer graphics, motion planning, and solid modeling~\cite{CEGSS96}. Our proofs combine two main ingredients: geometric information in the form of bounds on the number of incidences among the objects, and a Tur\'an-type theorem that converts this information into a Ramsey-type statement. We establish a general lemma that allows us to streamline the proofs.

Ramsey's Theorem for graphs and hypergraphs only guarantees the existence of rather small cliques or independent sets. However, as discussed below, for the geometric relations we study the bounds are known to be much larger. Therefore we are interested in finding the correct asymptotics. In particular, we are interested in the \emph{Erd\H os-Hajnal property}. A class of graphs has this property if each member with $n$ vertices has either a clique or an independent set of size $n^{\delta}$ for some constant $\delta>0$. This comes from the {\em Erd\H{o}s-Hajnal conjecture} which states that, for each graph $H$, the family of graphs excluding $H$ as an induced subgraph has this property. Our results yield new Erd\H os-Hajnal exponents for each of the classes of (hyper)graphs studied. 

The results presented here make use of important recent advances in combinatorial geometry.
The key example is the bound on the number of incidences between points and lines in $\R^3$ given by Guth and Katz~\cite{GK} in their recent solution of the Erd\H os distinct distances problem.
Such results have sparked a lot of interest in the field, and it can be expected that further progress will yield further Ramsey-type results.

\subsection{A general framework}

In general we consider two classes of geometric objects $\calP$ and $\calQ$ in $\R^d$ and a binary
incidence relation contained in $\calP\times\calQ$. For a finite set $P\subseteq\calP$ and a fixed 
integer $t\geq 2$, we say that a $t$-subset $S\in {P\choose t}$ is {\em degenerate} whenever there exists
$q\in\calQ$ such that every $p\in S$ is incident to $q$. Hence the incidence
relation together with the integer $t$ induces a $t$-uniform hypergraph $H=(P,E)$, where
$E\subseteq {P\choose t}$ is the set of all degenerate $t$-subsets of $P$.
A clique in this hypergraph is a subset $S\subseteq P$ such that ${S\choose t}\subseteq E$.
Similarly, an independent set is a subset $S\subseteq P$ such that ${S\choose t}\cap E=\emptyset$.

In what follows, the families $\cal P$ and $\cal Q$ will mostly consist of lines or points in 3-space.
We are interested in Ramsey-type statements stating that the $t$-uniform hypergraph $H$
induced by a set $P\subset\calP$ of size $n$ has either a clique of size $\omega (n)$
or an independent set of size $\alpha (n)$.

\subsection{Previous results}

We first briefly survey some known results that fit into this framework.
In many cases, either $\calP$ or $\calQ$ is a set of points.
When $\calP$ is a set of points, finding a large independent set amounts to finding a large subset of points in some kind of general position defined with respect to $\calQ$.
When $\calQ$ is the set of points, we are dealing with intersections between the objects in $\calP$. In particular, the case $t=2$ corresponds to the study of geometric intersection graphs.

\subsubsection*{General position subset problems}

A set in $\R^d$ is usually said to be in general position whenever no $d+1$ points lie on a hyperplane.
For points and lines in the plane, Payne and Wood proved that the 
Erd\H{o}s-Hajnal property essentially holds with exponent $1/2$~\cite{PW13}. 
Cardinal et al. proved an analogous result in $\R^d$~\cite{CTW14}.
\begin{theorem}[\cite{PW13,CTW14}]
\label{thm:gp}
Fix $d\geq 2$. Every set of $n$ points in $\R^d$ contains $\sqrt{n}$ cohyperplanar points
or $\Omega ((n/\log n)^{1/d})$ points in general position.
\end{theorem}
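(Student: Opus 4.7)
The plan is to follow the paper's framework: upper-bound the number of degenerate (cohyperplanar) $(d+1)$-tuples via a point--hyperplane incidence estimate, then extract a large independent set via a Tur\'an-type theorem for sparse hypergraphs. I would proceed by induction on $d$, taking the Payne--Wood theorem~\cite{PW13} as the base case $d=2$.

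For the inductive step, I would assume no $\sqrt n$ of the given points lie on a common hyperplane (otherwise the first alternative of Theorem~\ref{thm:gp} holds). Form the $(d+1)$-uniform hypergraph $\mathcal H$ on the point set whose edges are the cohyperplanar $(d+1)$-subsets; an independent set in $\mathcal H$ is exactly a set of points in general position in $\R^d$, so it suffices to produce one of size $\Omega((n/\log n)^{1/d})$. If $t_k$ denotes the number of hyperplanes containing exactly $k$ of the points, then $\mathcal H$ has $\sum_{d+1\le k\le \sqrt n}\binom{k}{d+1}t_k$ edges. Splitting this sum dyadically over $k$ and applying an incidence bound of Szemer\'edi--Trotter--Agarwal--Aronov type, namely $t_k=O(n^d/k^{d+1})$, gives a contribution of $O(n^d)$ per dyadic range, hence $O(n^d\log n)$ edges in total. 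A Spencer-type estimate for independent sets in a $(d+1)$-uniform hypergraph of average degree $O(n^{d-1}\log n)$ then yields an independent set of size $\Omega(n/(n^{d-1}\log n)^{1/d})=\Omega((n/\log n)^{1/d})$, as required.

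The main obstacle is the incidence bound $t_k=O(n^d/k^{d+1})$, which does not hold for arbitrary configurations: pencils of hyperplanes through a common lower-dimensional flat can support many rich hyperplanes without the point set being truly ``spread out.'' I would address this with an auxiliary cleaning step, applying the inductive hypothesis in each intermediate dimension $2\le j\le d-1$ to strip off points concentrated on $j$-flats until the residual set satisfies the non-degeneracy assumptions needed for the hyperplane incidence bound. Controlling the loss during this cleaning---so that a residual subset of size $\Omega(n)$, or at least enough to preserve the target exponent up to polylogarithmic factors, survives---is the technical crux of the argument; the framework lemma advertised in the introduction is precisely what packages the incidence-plus-Tur\'an step cleanly once this non-degeneracy has been arranged.
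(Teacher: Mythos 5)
Your outline is right up to the point where the whole difficulty lies, and there it stops short. The incidence bound you want, $t_{\ge k}=O(n^d/k^{d+1})$ (plus a lower-order term $n^{d-1}/k^{d-1}$, which is why the hypothesis $\ell=O(\sqrt n)$ is needed), is the Elekes--T\'oth theorem, and it holds only for \emph{$\gamma$-degenerate} hyperplanes, i.e.\ those in which at most a $\gamma$ fraction of the incident points lie on a single $(d-2)$-flat. You correctly identify that pencils of hyperplanes through a rich $(d-2)$-flat break the bound, but your proposed remedy --- ``cleaning'' the point set by stripping off points concentrated on lower-dimensional flats via the inductive hypothesis --- is not an argument, and it cannot be made into one in this form. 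Non-degeneracy is a condition on each spanned hyperplane \emph{relative to its own intersection with $P$}: a hyperplane containing only $d+1$ points, $d$ of them on a $(d-2)$-flat, is already non-$\gamma$-degenerate, and such configurations cannot be removed by deleting a small number of points. Moreover, deleting points concentrated on $j$-flats may cost a constant fraction (or more) of $P$, and you give no mechanism to control this loss; you yourself flag it as the unresolved ``technical crux.''

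The actual proof (Payne--Wood for $d=2$, Cardinal--T\'oth--Wood for general $d$; the paper reproves a generalised form as Lemma~\ref{le:mainle} in Appendix~\ref{app:cardproof}) never cleans the point set. Instead, the induction is on the \emph{counting lemma} --- the number of cohyperplanar $(d+1)$-tuples is $O(n^d\log\ell)$ when no hyperplane contains more than $\ell=O(\sqrt n)$ points --- and the degenerate tuples are split into three types: (1) tuples lying inside a $(d-2)$-flat, counted by a generic projection to $\R^{d-1}$ (which turns these flats into hyperplanes while preserving incidences, Lemma~\ref{le:tec}) and the inductive hypothesis; (2) tuples spanning a $\gamma$-degenerate hyperplane, counted by Elekes--T\'oth exactly as in your dyadic computation; and (3) tuples spanning a non-$\gamma$-degenerate hyperplane, which are charged to the rich $(d-2)$-flat responsible, giving at most $O(nk^d)$ tuples per flat with $k$ points, and then counted again via projection and induction. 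Only after this global count does Spencer's theorem enter, giving $\Omega((n/\log n)^{1/d})$ as in your final calculation. So the Tur\'an/Spencer step and the exponent bookkeeping in your proposal are fine, but the missing idea is the type decomposition with the charging argument for non-degenerate hyperplanes and the projection-based induction on the tuple count; without it your proof does not go through.
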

In both cases, the proofs rely on incidence bounds, in particular the
Szemer\'edi-Trotter Theorem~\cite{ST83} in the plane, and the  
point-hyperplane incidence bounds due to Elekes and T\'oth~\cite{ET05} in $\R^d$. 
In this paper we formalise the technique used in those proofs in order to easily apply it to other incidence relations.

\subsubsection*{Erd\H{o}s-Hajnal properties for geometric intersection graphs}

A survey of Erd\H{o}s-Hajnal properties for geometric intersection graphs was
produced by Fox and Pach~\cite{FP08}. A general Ramsey-type statement for 
the case where $\calP$ is the set of plane convex sets has been known for a long time.
In what follows, a {\em vertically convex} set is a set whose intersection with any vertical line is a line segment.
\begin{theorem}[Larman et al.~\cite{LMPT94}]
Any family of $n$ compact, connected and vertically convex sets in the plane contains at least $n^{1/5}$ members that are either pairwise disjoint or pairwise intersecting.
\end{theorem}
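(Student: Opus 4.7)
The plan is to find a constant number of partial orders on the family so that every pair of disjoint members is comparable in at least one order, and then to run a Dilworth-type product argument that trades off chain-lengths in the orders against the size of a fibre of the resulting product map; the target bound $n^{1/5}$ corresponds to using exactly four orders.

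First, for compact, connected, vertically convex sets $A,B\subset\R^2$ I would introduce four relations. Let $A\prec_1 B$ mean that $A$ and $B$ are separated by a vertical line with $A$ on the left, and let $\prec_2$ denote the symmetric ``right'' version. For the two remaining orders I would use the vertical analogues: $A\prec_3 B$ iff over the overlap of the $x$-projections of $A$ and $B$ every point of $A$ lies strictly below every point of $B$ (with an appropriate extension for $x$-disjoint pairs, chosen so as to make the relation transitive), and $\prec_4$ the analogous ``above'' relation. I would then verify two geometric facts: each $\prec_i$ is transitive and hence a strict partial order, and every pair of disjoint sets is comparable in at least one $\prec_i$. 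The exhaustiveness is the decisive geometric input: if the $x$-projections of two disjoint sets are themselves disjoint they are $\prec_1$- or $\prec_2$-comparable; if they overlap then vertical convexity makes the vertical fibres over the overlap into disjoint closed intervals, and compactness plus connectedness forces the fibres of one set to lie consistently above or below those of the other, yielding $\prec_3$ or $\prec_4$.

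Given the four partial orders, I would finish as follows. For each set $A$ and each $i\in\{1,2,3,4\}$ let $f_i(A)$ be the length of the longest $\prec_i$-chain ending at $A$, and consider the map $A\mapsto (f_1(A),\dots,f_4(A))$. Two distinct sets with equal image are incomparable in every $\prec_i$, hence (by exhaustiveness) must intersect; every fibre of the map is therefore a pairwise intersecting subfamily. If some $f_i$ attains a value $\geq n^{1/5}$, the corresponding $\prec_i$-chain of that length is a pairwise disjoint subfamily. Otherwise the image lies in $\{1,\dots,\lceil n^{1/5}\rceil\}^4$, so by pigeonhole some fibre has size $\geq n/n^{4/5}=n^{1/5}$, giving a pairwise intersecting subfamily of the required size.

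The hard part is choosing the ``below'' and ``above'' relations carefully enough to be simultaneously transitive and exhaustive. A naive definition of $\prec_3$ that compares only the fibres over the overlap of the $x$-projections fails transitivity as soon as the $x$-projections of $A$ and $C$ are disjoint but both meet that of $B$, so the definition must genuinely exploit both vertical convexity (which makes each vertical fibre an interval, so that ``below'' is unambiguous where defined) and connectedness of the sets (which allows the local height comparison to be propagated across $x$-coordinates). This is precisely where the hypotheses of the theorem are needed, and working out the right definition is the main technical content of the proof.
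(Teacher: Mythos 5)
Note first that the paper does not prove this statement: it is quoted from Larman, Matou\v{s}ek, Pach and T\"or\H{o}csik~\cite{LMPT94}, so the only comparison available is with the original proof there. Your outer argument is exactly the LMPT scheme: find a bounded number of partial orders in which any two disjoint members are comparable and any two comparable members are disjoint, then map each set to the tuple of longest-chain lengths ending at it, and play chains (pairwise disjoint) against fibres (pairwise intersecting); with four orders this yields $\max\{\text{chain},\text{fibre}\}\geq n^{1/5}$. That counting part of your write-up is correct.

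The gap is in the geometric core, i.e.\ in the four orders themselves. As you define them, $\prec_2$ is just the reverse of $\prec_1$ and $\prec_4$ the reverse of $\prec_3$; reversing an order changes neither its comparability relation nor its chains, so your scheme really uses only two orders, and if $\prec_1$ and $\prec_3$ had the properties you claim, the same argument applied to the pair $(f_1,f_3)$ would already give $n^{1/3}$, a much stronger bound than the theorem -- a strong indication that no such pair exists. Indeed the ``below'' order is exactly where it breaks: the natural definition (compare fibres over the overlap of the $x$-projections) is not transitive and does not even guarantee that comparability propagates to disjointness. For instance, take $A$ a long, low horizontal segment, $B$ a short segment above the left end of $A$, and $C$ a steep segment lying above $B$ over $B$'s $x$-range but descending and crossing $A$ farther to the right: then $A\prec_3 B\prec_3 C$ while $A\cap C\neq\emptyset$. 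You acknowledge this and defer ``the right definition'' as ``the main technical content of the proof'' -- but that is precisely the content of the LMPT argument. Their four orders are genuinely distinct (no two are reverses of one another); they mix the horizontal and vertical comparisons via suitably chosen separating directions, and the work lies in verifying that each is transitive, that comparable sets are disjoint, and that every disjoint pair is comparable in at least one. (Even the preliminary claim you use, that two disjoint sets with overlapping $x$-projections compare consistently above/below on every common vertical line, needs an argument from compactness and connectedness rather than being immediate.) So what you have is a correct reduction of the theorem to the key lemma of~\cite{LMPT94}, not a proof of that lemma.
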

Larman et al. also showed that there exist arrangements of $k^{2.3219}$ line segments with at most $k$
pairwise crossing and at most $k$ pairwise disjoint segments. This lower bound was improved successively by K\'arolyi et al.~\cite{KPT97}, and Kyncl~\cite{K12}.

More recently Fox and Pach studied intersection graphs of a large variety of other geometric objects~\cite{FP12}. 
For example they proved the following about families of $s$-intersecting curves in the plane -- families such that 
no two curves cross more than $s$ times.
\begin{theorem}[Fox-Pach~\cite{FP12}]
For each $\epsilon >0$ and positive integer $s$, there is $\delta = \delta(\epsilon, s)>0$
such that if $G$ is an intersection graph of a $s$-intersecting family of $n$ curves in the plane, 
then $G$ has a clique of size at least $n^{\delta}$ or an independent set of size at least $n^{1-\epsilon}$.
\end{theorem}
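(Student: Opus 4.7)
The plan is to prove the theorem by strong induction on $n$, combining (i) a structural constraint on the intersection graph $G$ that follows from the $s$-intersecting property, and (ii) a density-based dichotomy that either extracts a nearly linear-size independent set from a sparse regime or iteratively builds a clique from a dense one. The base case for $n \le n_0(\epsilon, s)$ is handled by choosing $\delta$ so small that $n_0^\delta \le 2$.

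First I would establish via the crossing lemma that $G$ is $K_{t,t}$-free for some $t = t(s) = O(\sqrt{s})$. If $G$ contained $K_{t,t}$ with parts $A, B$ of $t$ curves each, the $2t$ curves realize a plane drawing of $K_{t,t}$ in which the crossing lemma forces $\Omega(t^4)$ crossings, whereas the $s$-intersecting hypothesis permits only $s \binom{2t}{2} = O(s t^2)$ crossings in total -- a contradiction once $t$ exceeds a suitable multiple of $\sqrt{s}$. By K\H{o}v\'ari-S\'os-Tur\'an this gives $|E(G)| = O_s(n^{2 - 1/t})$.

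Next I would apply a density dichotomy with threshold $m_0 := n^{1+\epsilon}$. If $|E(G)| \le m_0$, Tur\'an's theorem yields an independent set of size $\Omega(n^2 / m_0) = \Omega(n^{1-\epsilon})$ and we are done. Otherwise the average degree exceeds $2 n^\epsilon$, and the $K_{t,t}$-freeness permits a dependent-random-choice-style extraction producing a vertex set $W$ of size $\Omega(n^{1-\epsilon/2})$ such that a set $U$ of $\Omega(n^{\delta_0})$ vertices is jointly adjacent to all of $W$, for some $\delta_0 = \delta_0(\epsilon, s) > 0$. Recursing on $G[W]$ with parameter $\epsilon/2$ either gives an independent set of size $|W|^{1 - \epsilon/2} \ge n^{(1-\epsilon/2)^2} \ge n^{1-\epsilon}$, or a clique $C \subseteq W$ of size $|W|^\delta$, to which a clique extracted recursively from $G[U]$ is adjoined (all cross edges are present by construction, and the induced subgraphs are still intersection graphs of $s$-intersecting families, so induction applies).

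The main obstacle is closing the clique recursion with positive exponent $\delta$: a naive single descent replaces $n$ by $n^{1-\epsilon/2}$ and hence contracts the clique exponent by a factor of $1-\epsilon/2$ at each level, driving $\delta$ to zero. The fix is to iterate the dense-case extraction $\Theta(1/\epsilon)$ times in parallel at each level, so that each level contributes an additive $\Omega(n^{\delta_0})$ to the clique and the final size accumulates geometrically to $n^{\delta}$; balancing this against the $K_{t,t}$-free upper bound on common neighborhoods pins down $\delta = \delta(\epsilon, s)$ as a value polynomial in $\epsilon / t(s)$. A secondary technical point is quantifying the dependent-random-choice extraction so that $|U|$ and $|W|$ can simultaneously meet the required lower bounds; this is precisely where the $K_{t,t}$-free inequality of the form $|U| \cdot |W|^{t-1} = O(t \cdot n)$ controls the trade-off.
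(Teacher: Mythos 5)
The theorem you are trying to prove is quoted by the paper from Fox and Pach~\cite{FP12} without proof, so there is no in-paper argument to compare against; judged on its own terms, your proposal breaks down at the very first step. The claim that an intersection graph of an $s$-intersecting family is $K_{t,t}$-free for $t=O(\sqrt{s})$ is false, and the crossing-lemma argument you give for it conflates two different roles of the curves. In the intersection graph the curves are the \emph{vertices}; a copy of $K_{t,t}$ only asserts that each curve in $A$ meets each curve in $B$, and this does not produce a plane drawing of $K_{t,t}$ to which the crossing lemma could be applied (there the curves would have to be the \emph{edges}, joining point-vertices). A concrete counterexample: take $t$ horizontal and $t$ vertical segments forming a grid. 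This is a $1$-intersecting family whose intersection graph is exactly $K_{t,t}$, with only $t^2$ crossing points in total --- no contradiction arises for any $t$. Consequently the K\H{o}v\'ari--S\'os--Tur\'an bound $|E(G)|=O_s(n^{2-1/t})$ fails (the grid gives $\Theta(n^2)$ edges with $s=1$), and everything downstream --- the density threshold $n^{1+\epsilon}$, the dependent-random-choice extraction whose trade-off is ``controlled'' by $K_{t,t}$-freeness, and the clique recursion --- loses its foundation. The difficulty is intrinsic: dense intersection graphs of curves genuinely do contain huge complete bipartite subgraphs, and the actual Fox--Pach proof exploits this rather than excludes it, combining the strong Erd\H{o}s--Hajnal property for such families (two linear-size subfamilies that are completely crossing or completely disjoint) in the dense case with separator theorems for string graphs in the sparse case, then recursing. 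If you want to salvage your outline, you would need to replace the $K_{t,t}$-freeness lemma by a statement of that type, which is a different (and substantially harder) ingredient than the crossing-lemma count you propose.
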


Erd\H{o}s-Hajnal properties for hypergraphs have been proved by Conlon, Fox, and Sudakov~\cite{CFS12}. 

\subsubsection*{Semi-algebraic sets and relations}

A very general version of the problem for the case $t=2$ has been studied by Alon et al.~\cite{APPRS05}.
Here Ramsey-type results are provided for intersection relations between semialgebraic sets of constant description
complexity in $\R^d$. It was shown that intersection graphs of such objects always have the Erd\H os-Hajnal property. The proof
combines a linearisation technique with a space decomposition theorem due to Yao and Yao~\cite{YY85}.
The following general statement can be extracted from their proof.
\begin{theorem}
  \label{thm:alg}
  Consider a relation $R$ on elements of a family $\cal F$ of semi-algebraic sets of constant description complexity.
  Suppose that each element $f\in {\cal F}$ can be parameterized by a point $f^*\in\R^d$, and that the relation $R$
  can be mapped into a semi-algebraic set $R^*$ in $\R^{2d}$. For each $g\in {\cal F}$, let $\Sigma_g = \{f^*\in \R^d : (f^*, g^*)\in R^*\}$.
  Let $Q$ be the smallest dimension of a space $\R^Q$ in which the description of $\Sigma_g$ becomes linear, and let $k$ be the number of
  bilinear inequalities in the definition of $R^*$ in $\R^Q$. Then the graph of the relation $R$ satisfies the Erd\H{o}s-Hajnal property
  with exponent $1/(2k(Q+1))$.
\end{theorem}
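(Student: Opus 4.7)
The plan is to follow the standard three-step template for semi-algebraic Erd\H{o}s-Hajnal results: linearise, establish a bipartite uniformity lemma, then run an Erd\H{o}s-Hajnal-style recursion. First I would apply the given linearisation: lift each parameter $f^*\in\R^d$ to $\phi(f^*)\in\R^Q$ via the polynomial (essentially Veronese) map guaranteed by hypothesis, so that each cross-section $\Sigma_g$ becomes a linear polyhedron in $\R^Q$ and the relation $R^*$ becomes a conjunction of $k$ bilinear inequalities $p_1(x,y)\geq 0,\ldots,p_k(x,y)\geq 0$ on $\R^Q\times\R^Q$. From here on the problem is to find a clique or independent set in the $n$-vertex bilinear graph on the lifted point set in $\R^Q$.

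Next I would prove a bipartite uniformity lemma: for any finite $P\subseteq\R^Q$ with $|P|=n$ there exist disjoint subsets $A,B\subseteq P$, each of size at least $\beta n$ with $\beta=2^{-k(Q+1)}$, on which $R^*$ has constant truth value. For a single bilinear inequality $p_i$, one applies the Yao-Yao space decomposition theorem in $\R^Q$, which produces a point $c$ together with $Q$ hyperplanes through $c$ partitioning $\R^Q$ into $2^Q$ simplicial cones, each containing at least a $2^{-Q}$-fraction of $P$. Choosing one cone for $A$ and one for $B$ on opposite sides of a hyperplane aligned with the linear restriction $p_i(\cdot,c)$ yields constant sign of $p_i$ over $A\times B$ at a cost of a $2^{-(Q+1)}$ fraction on each side. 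Iterating through the $k$ inequalities, refining the current $A$ and $B$ at each step, compounds the losses multiplicatively to $\beta=2^{-k(Q+1)}$; constant sign of every $p_i$ on the resulting $A\times B$ makes $R^*$ constant there.

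Finally I would feed this lemma into the classical Erd\H{o}s-Hajnal recursion. Let $W(n)$ and $I(n)$ denote the worst-case maximum clique and independent set sizes in $n$-vertex graphs of the class. The uniform bipartition gives $W(n)\cdot I(n)\geq 2\cdot W(\beta n)\cdot I(\beta n)$: whichever ``colour'' the Ramsey choice selects, the corresponding quantity doubles by concatenating recursive monochromatic solutions from $A$ and $B$ along the bipartite structure, while the other is bounded below by the recursive value on a single side. Unrolling this recurrence gives $W(n)\cdot I(n)\geq n^{1/\log_2(1/\beta)}$, so $\max(W(n),I(n))\geq\sqrt{W(n)\cdot I(n)}\geq n^{1/(2\log_2(1/\beta))}=n^{1/(2k(Q+1))}$.

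The main obstacle is the uniformity lemma: making the sign of a \emph{bilinear} form constant over an entire product $A\times B$ --- rather than merely making a linear form have fixed sign on a single subset --- demands that Yao-Yao be combined with a pivot selection in both arguments simultaneously. Organising the $k$-fold iteration so that earlier sign constraints are preserved while new ones are imposed at the budgeted cost of exactly $2^{-(Q+1)}$ per inequality, thereby achieving the precise fraction $\beta=2^{-k(Q+1)}$, is what pins the final exponent to $1/(2k(Q+1))$.
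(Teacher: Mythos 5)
Your overall architecture is the right one, and in fact it is the same as the source the paper defers to: the paper gives no proof of Theorem~\ref{thm:alg} itself, but extracts it from Alon et al.~\cite{APPRS05}, whose proof is exactly ``linearise, prove a bipartite uniformity (strong Erd\H{o}s--Hajnal) lemma via the Yao--Yao partition theorem, then run the doubling recursion.'' Your last step is fine: the recursion $W(n)\,I(n)\ge 2\,W(\beta n)\,I(\beta n)$ and the arithmetic $\max(W,I)\ge n^{1/(2\log_2(1/\beta))}$ with $\beta=2^{-k(Q+1)}$ is the standard way the exponent $1/(2k(Q+1))$ arises (with the caveat that in this recursion any further constant-factor loss in $\beta$, e.g.\ from forcing $A$ and $B$ to be disjoint, does leak into the exponent, so some care is needed there).

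The genuine gap is in the uniformity lemma, which you yourself flag as ``the main obstacle'' but do not actually overcome. First, the Yao--Yao theorem is not the statement you use: it does not produce a point $c$ and $Q$ hyperplanes through $c$ cutting $\R^Q$ into $2^Q$ equal cones (equipartitions by concurrent hyperplanes do not exist in general in high dimension); it produces a partition of $\R^Q$ into $2^Q$ parts of equal weight with the key property that \emph{every hyperplane is disjoint from at least one part} --- and that property, which your sketch never invokes, is precisely what makes the argument work. Second, your mechanism for fixing the sign of $p_i$ on $A\times B$ fails: placing $A$ and $B$ in cones ``on opposite sides of a hyperplane aligned with $p_i(\cdot,c)$'' only controls $p_i(x,c)$ at the single pivot $c$, whereas you need the sign of $p_i(a,b^*)$ for every $b\in B$, and the hyperplanes $\{x: p_i(x,b^*)=0\}$ vary with $b$ and need not separate your two cones. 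The correct step is: for the current inequality $p_i$, consider the hyperplanes $h_b=\{x:p_i(x,b^*)=0\}$ for $b$ in the current $B$-side, apply Yao--Yao to the current $A$-side point set, note that each $h_b$ misses some part, pigeonhole to find one part $A'$ (a $2^{-Q}$-fraction) missed by at least a $2^{-Q}$-fraction of the $b$'s, and pigeonhole once more on the side to get $B'$ of at least a $2^{-(Q+1)}$-fraction on which $p_i$ has constant sign over $A'\times B'$; iterating over the $k$ inequalities gives the claimed fraction $2^{-k(Q+1)}$. Without this ``every hyperplane misses a part'' plus double-pigeonhole step, the bilinear sign cannot be made constant over a whole product, and the proof as written does not go through.
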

A similar result is given for the so-called {\em strong} version of the Erd\H os-Hajnal property: for every such intersection relation, there
exists a constant $\epsilon$ and a pair of subfamilies ${\cal F}_1, {\cal F}_2\subseteq {\cal F}$, each of size at least $\epsilon |{\cal F}|$,
such that either every element of ${\cal F}_1$ intersects every element of ${\cal F}_2$, or no element of ${\cal F}_1$ intersects any
element of ${\cal F}_2$. The exponent for the usual Erd\H os-Hajnal statement is a function of this $\epsilon$.

As an example, Alon et al. applied their machinery to prove the following result on arrangement of lines in $\R^3$.
\begin{theorem}[Alon et al.~\cite{APPRS05}]
Every family of $n$ pairwise skew lines in $\R^3$ contains at least $k\geq n^{1/6}$ 
elements $\ell_1,\ell_2,\ldots ,\ell_k$ such that $\ell_i$ passes above $\ell_j$ for all $i<j$. 
\end{theorem}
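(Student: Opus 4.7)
The plan is to instantiate Theorem~\ref{thm:alg} for the \emph{above} relation on pairwise skew lines and then bootstrap the resulting strong Erd\H{o}s--Hajnal property into a long chain by a divide-and-conquer recursion.

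First, I would parameterise the lines. After a generic rotation, we may assume no line is vertical, so each line $\ell\subset\R^3$ is determined by four real numbers (a point $\ell^{*}\in\R^4$), for example by writing $\ell=\{(x,ax+b,cx+d):x\in\R\}$ with $\ell^{*}=(a,b,c,d)$. The relation ``$\ell$ passes above $\ell'$'' can be computed by projecting both lines onto the $xy$-plane, locating the apparent crossing $x_0=(b'-b)/(a-a')$, and comparing the $z$-coordinates of $\ell$ and $\ell'$ over $x_0$; this amounts to the sign of a single polynomial of bounded degree in $(\ell^{*},(\ell')^{*})\in\R^{8}$. Hence the above-relation is a semi-algebraic set $R^{*}\subset\R^{8}$ of constant description complexity.

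Next, I would linearise $R^{*}$ by lifting the four parameters of each line via a Veronese-type map to a point in some $\R^{Q}$ of bounded dimension, chosen so that the defining polynomial becomes bilinear. Theorem~\ref{thm:alg}, together with its strong version mentioned in the excerpt, then yields a constant $\epsilon=\epsilon(Q,k)>0$ such that every set of $n$ pairwise skew lines contains two subfamilies $\mathcal{F}_{1},\mathcal{F}_{2}$, each of size at least $\epsilon n$, with the property that every line of $\mathcal{F}_{1}$ passes above every line of $\mathcal{F}_{2}$ (swapping roles if necessary).

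Finally, I would extract a chain by recursion. Let $f(n)$ denote the largest guaranteed chain size for $n$ pairwise skew lines. Chains obtained recursively inside $\mathcal{F}_{1}$ and inside $\mathcal{F}_{2}$ can be concatenated (upper chain followed by lower chain) because every element of the first lies above every element of the second, giving
\[
f(n)\;\geq\;2\,f(\epsilon n),
\]
which solves to $f(n)=\Omega\!\left(n^{\log 2/\log(1/\epsilon)}\right)$. To match the stated bound $n^{1/6}$, one needs $\epsilon\geq 1/64$, which should follow from using the Yao--Yao partition in $\R^{Q}$ into $2^{Q}$ regions of equal measure with $Q=6$.

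The main obstacle is precisely this bookkeeping: one must choose a parameterisation and a linearisation that keep both the ambient dimension $Q$ and the number $k$ of bilinear inequalities as small as possible, so that the Yao--Yao argument inside the proof of Theorem~\ref{thm:alg} produces subfamilies of relative size at least $1/64$. Any slackness in the linearisation immediately degrades the exponent in the recursion and weakens the bound below $n^{1/6}$; conversely, no purely combinatorial argument (for instance, Dilworth applied to an arbitrary tournament) is available here because the above-relation on skew lines need not be transitive.
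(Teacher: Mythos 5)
This statement is not proved in the paper at all: it is imported verbatim from Alon et al.~\cite{APPRS05} as an illustration of their semi-algebraic machinery (Theorem~\ref{thm:alg}), so there is no internal proof to compare against. Your outline is, qualitatively, exactly that machinery: parameterise each line by a point in $\R^4$, express ``above'' semi-algebraically, linearise, extract a strong Erd\H{o}s--Hajnal pair $\mathcal{F}_1,\mathcal{F}_2$ of size $\geq\epsilon n$ with all of $\mathcal{F}_1$ above all of $\mathcal{F}_2$ (swapping if needed, which is legitimate since for generic skew lines the relation is a tournament), and then run the recursion $f(n)\geq 2f(\epsilon n)$, which correctly yields a chain of length $n^{\log 2/\log(1/\epsilon)}$. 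You are also right that Dilworth is unavailable because ``above'' is not transitive. So your argument does establish a chain of length $n^{\delta}$ for some constant $\delta>0$.

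The genuine gap is the exponent $1/6$, which is the actual content of the statement, and it sits exactly where you wave your hands. In your parameterisation, ``$\ell$ above $\ell'$'' is the XOR of two sign conditions: the sign of $a-a'$ and the sign of $(c-c')(b'-b)+(a'-a)(d'-d)$ (clearing the denominator flips the inequality according to the sign of $a-a'$). Hence, after lifting, the region associated with a fixed line is a double wedge bounded by \emph{two} hyperplanes, not a halfspace. A Yao--Yao partition only guarantees that each single hyperplane misses some part; with two hyperplanes per line there need not be a part missed by both, and the standard remedy (one round of partitioning per inequality) is precisely what produces the factor $2k$ in the exponent $1/(2k(Q+1))$ of Theorem~\ref{thm:alg} --- with $k=2$ and $Q=5$ this gives only $1/24$, the same benchmark the paper itself records for the Pl\"ucker-based treatment of the intersection relation. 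Your numerology is also off by one even in the single-inequality case: after Yao--Yao into $2^{Q}$ parts one must pigeonhole the second family over parts \emph{and} sides, so one only gets $\epsilon=2^{-(Q+1)}$; reaching $\epsilon=2^{-6}$, hence $1/6$, requires reducing ``above'' to a single linear condition in a $5$-dimensional linearisation (a relation-specific sharpening carried out in~\cite{APPRS05}), and your proposal supplies no such reduction. As written, it proves the Erd\H{o}s--Hajnal property for the above relation, but not the bound $n^{1/6}$.
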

For the problems we consider, however, the exponents we obtain are significantly larger than what can be obtained from Theorem~\ref{thm:alg}.

A general version of this problem in which degenerate $t$-tuples are defined by a finite number of polynomial equations and inequalities of bounded
description complexity has recently been studied by Conlon et al.~\cite{CFPSS13}.
 They show that the Ramsey numbers in this general setting grow like towers of height $t-1$, and that this is asymptotically tight. Such a setting is relevant here, since we also consider Erd\H os-Hajnal statements for some geometric hypergraphs.

\subsection{Summary of our results}

In Section~\ref{sec:prelim} we give a simple lemma that allows to convert geometric incidence bounds into bounds on the
number of degenerate subsets, hence on the number of hyperedges of the hypergraphs of interest. We also recall the
statements of the Tur\'an bound for hypergraphs due to Spencer.

Section~\ref{sec:pointslinesR3} deals with the case where $\calP$ and $\calQ$ are lines and points in $\R^3$.
A natural object to consider is the intersection graph of lines in $\R^3$, for which we prove the Erd\H{o}s-Hajnal
property with exponent $1/3$.
\begingroup
\def\thetheorem{\ref{thm:igl}}
\begin{theorem}
The intersection graph of $n$ lines in $\R^3$ has a clique or independent set of size $\Omega(n^{1/3})$. 
\end{theorem}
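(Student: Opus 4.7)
The plan is to follow the general framework announced in the paper: combine a geometric incidence bound with a Tur\'an-type argument on the intersection graph. I will exploit the classical fact that any family of pairwise intersecting lines in $\R^3$ is either \emph{concurrent} (all passing through a common point) or \emph{coplanar} with no two members parallel. Thus a clique in the intersection graph corresponds to one of these two configurations, while an independent set is a family of pairwise skew or parallel lines.

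Suppose for contradiction that the intersection graph has neither a clique nor an independent set of size $cn^{1/3}$ for a suitable constant $c$. Because concurrent lines form a clique, no point is incident to $cn^{1/3}$ of the lines. Because coplanar lines with no two parallel form a clique, every plane contains at most $cn^{1/3}$ parallel classes of lines from our family. On the other hand, a maximal class of pairwise parallel lines is an independent set, so each such class has size at most $cn^{1/3}$. Combining the two observations bounds the number of lines in any single plane by $O(n^{2/3})$. Lines lying in a common regulus are pairwise skew, hence also independent, so no regulus contains more than $cn^{1/3}$ of the lines either.

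Next I invoke the Guth--Katz bound on line-line intersections in $\R^3$: under the hypothesis that no plane or regulus contains more than $B$ lines, the number of intersecting pairs is $O(n^{3/2}+nB)$. Substituting $B=O(n^{2/3})$ yields an upper bound of $O(n^{5/3})$ on the number of edges of the intersection graph, and therefore an average degree of $O(n^{2/3})$. The Caro--Wei (Tur\'an) bound then produces an independent set of size $\Omega(n/n^{2/3})=\Omega(n^{1/3})$, contradicting the starting assumption and finishing the proof.

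The main obstacle I anticipate is using the Guth--Katz bound with the ``large-threshold'' value $B=n^{2/3}$ rather than the familiar $\sqrt n$, and ensuring that the term $nB$ does not swamp the $n^{3/2}$ term in a way that spoils the exponent $1/3$ after Tur\'an. A smaller but genuine care point is the treatment of parallelism: a coplanar family is a clique only when no two lines are parallel, so the clique assumption bounds only \emph{parallel classes} per plane, and one separately uses that a large parallel class is itself an independent set to recover the $O(n^{2/3})$ bound on lines per plane.
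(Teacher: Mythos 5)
Your overall strategy is the same as the paper's: bound the maximum concurrency, the number of lines per plane (via parallel classes), and the number of lines per regulus, then feed a Guth--Katz-type incidence bound into Tur\'an's theorem. However, there is a genuine inaccuracy in the key incidence step. The Guth--Katz result (Theorem~\ref{th:gk22}, with the extension to general $s$) bounds the number of \emph{intersection points} covered by at least two lines, $P_{\ge 2}\lesssim n^{3/2}+ns$; it does not bound the number of \emph{intersecting pairs}, which is what you need for the edge count of the intersection graph. The pair count can be much larger than the point count when many lines are concurrent (e.g.\ $n$ concurrent lines in generic directions have $s=O(1)$ but $\Theta(n^2)$ intersecting pairs), so the statement you quote, ``no plane or regulus contains more than $B$ lines implies $O(n^{3/2}+nB)$ intersecting pairs,'' is false as stated. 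To convert points into pairs one must weight each intersection point by $\binom{j}{2}$ and sum over the rich-point bounds of Theorem~\ref{th:gk21}, which brings in the maximum concurrency $\ell$ and a logarithmic factor; this is exactly the content of Lemma~\ref{lem:nitoedges} and Proposition~\ref{pr:gk2}, giving $O(n^{3/2}\log\ell+ns+n\ell)$ edges. Your argument is rescued because you have already established $\ell\lesssim n^{1/3}$ from the concurrency-clique observation, so the corrected bound is still $O(n^{5/3})$ and the Tur\'an step goes through -- but as written, the step cites a bound that does not exist, and the conversion from rich points to pairs is the missing ingredient.

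A second, smaller slip: lines fully contained in a common regulus are \emph{not} pairwise skew -- every line of one ruling meets every line of the other ruling. The correct argument (used in Lemma~\ref{lem:iglines3}) is that the lines split into the two rulings, each ruling is pairwise skew and hence independent, so a regulus containing $B$ lines of your family yields an independent set of size at least $B/2$; this changes your bound only by a factor of $2$. With these two repairs your proof is correct and essentially coincides with the paper's, differing only in that you run the argument by direct contradiction with explicit thresholds rather than through the intermediate Lemma~\ref{lem:iglines3}.
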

\addtocounter{theorem}{-1}
\endgroup
This makes use of the Guth-Katz incidence bound between points and lines in $\R^3$~\cite{GK2}.
We further show that this exponent can be raised to $1/2$ if we consider lines in the projective 3-space.
We also show how to obtain bounds on the size of independent sets for $t=3$, in which a subset of lines in general position is
defined as a set of lines with no three intersecting in the same point.

Section~\ref{sec:lineslinesR3} deals with the setting where both $\calP$ and $\calQ$ are lines in $\R^3$.
We prove the following theorem.
\begingroup
\def\thetheorem{\ref{thm:main3}}
\begin{theorem}
Let $L$ be a set of $n$ lines in $\R^3$. Then
either there is a subset of $\sqrt n$ lines of $L$ that are all
stabbed by one line, or there is a subset of
$\Omega\left(\left(n/\log n\right)^{1/5}\right)$ lines of $L$ such
that no $6$-subset is stabbed by one line.
\end {theorem}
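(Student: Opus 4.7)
The plan is to convert the problem into an extremal question on a $6$-uniform hypergraph and then apply a Tur\'an-type bound. Let $H=(L,E)$ be the hypergraph whose hyperedges are the $6$-subsets $S\subseteq L$ sharing a common transversal (``stabbing'') line in $\R^3$. If $H$ contains a clique of size $\sqrt n$, the first alternative of the theorem already holds. Otherwise I would upper-bound $|E|$ and then invoke Spencer's Tur\'an-type theorem, which for a $6$-uniform hypergraph on $n$ vertices with $m$ edges gives an independent set of size $\Omega\bigl(n(n/m)^{1/5}\bigr)$; a bound $|E|=O(n^{5}\log n)$ would then yield the claimed $\Omega\bigl((n/\log n)^{1/5}\bigr)$.

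The assumption that no line in $\R^3$ stabs $\sqrt n$ lines of $L$ has two structural consequences. First, no $\sqrt n$ lines of $L$ can be coplanar, since a generic line in their common plane would stab all of them. Second, no $\sqrt n$ lines of $L$ can lie in a single ruling of a doubly-ruled quadric, since any line of the other ruling would stab all of them. These are precisely the non-degeneracy hypotheses required by the Guth--Katz point--line incidence theorem in $\R^3$, unlocking its machinery for our setting.

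To bound $|E|$, I would combine two ingredients. Classically, four pairwise skew lines in $\R^3$ that do not all lie on a common regulus admit at most two common transversals; consequently a stabber $\ell$ contributes at most $\binom{N_\ell}{6}$ hyperedges to $H$, where $N_\ell\leq\sqrt n$ is the number of lines of $L$ met by $\ell$. Partitioning stabbers into dyadic ranges for $N_\ell$ and bounding, at each scale $k$, the number of lines of $\R^3$ that meet at least $k$ lines of $L$ via a Guth--Katz-style incidence bound transferred to the line-meets-line setting, I would obtain
\[
|E|\;\le\;\sum_{\text{stabbers }\ell}\binom{N_\ell}{6}\;=\;O(n^{5}\log n),
\]
where the $\log n$ factor arises from the $O(\log n)$ dyadic scales in the range $6\le N_\ell\le\sqrt n$.

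The principal obstacle is the count of rich stabbers in the line-meets-line setting: Guth--Katz as originally stated controls \emph{points} incident to lines, whereas here we must count \emph{lines} incident to many lines of $L$. The natural route is via the Pl\"ucker embedding, which sends lines in $\R^3$ to points on the Klein quadric in $\mathbb{P}^5$ and converts the stabbing relation into incidence with a linear $3$-fold; a higher-dimensional point-variety incidence bound then applies once one verifies that the non-degeneracy conditions survive the embedding. Additional care is needed for exceptional $4$-subsets lying on a common regulus, which admit an entire $1$-parameter family of transversals; bounding their contribution separately, again using that no $\sqrt n$ lines of $L$ lie in one ruling of a regulus, is the main bookkeeping step.
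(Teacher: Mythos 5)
Your setup (the $6$-uniform hypergraph, Spencer's bound, and the target $|E|=O(n^5\log n)$) matches the paper's strategy, and the Pl\"ucker embedding is indeed the right starting point. But the central step --- actually proving $|E|=O(n^5\log n)$ --- is missing, and the route you sketch does not work as stated. Your bound $|E|\le\sum_{\ell}\binom{N_\ell}{6}$ is a sum over \emph{all} stabbing lines, which is an infinite family; worse, the number of $k$-rich stabbers is genuinely infinite in configurations that are perfectly consistent with your hypotheses (e.g.\ if just under $\sqrt n$ lines of $L$ pass through one point, every line through that point is a $k$-rich stabber, a two-parameter family; similarly for every line inside a plane containing many lines of $L$). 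So a dyadic count of rich stabbers cannot be the engine of the proof: one must count degenerate $6$-tuples directly, not stabbers. Moreover, ``Guth--Katz transferred to the line-meets-line setting'' is not an available black box --- Guth--Katz bounds points incident to many lines, which is a different quantity from lines incident to many lines, and the coplanarity/ruling conditions you derive are hypotheses for the point-line bound, not for any stabber-counting statement. Appealing to ``a higher-dimensional point-variety incidence bound'' after the Pl\"ucker map is precisely the hard technical content, and it cannot be generic: point--hyperplane incidences in $\R^5$ admit no nontrivial bound without controlling lower-dimensional degeneracies (put all points on a $3$-flat and every hyperplane through it is maximally rich), a degeneracy that your hypotheses do not exclude.

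The paper resolves exactly this difficulty differently. After the Pl\"ucker map, the hypothesis ``no stabber meets $\sqrt n$ lines'' becomes ``no hyperplane of the special family $\mathcal H$ (Pl\"ucker coefficient hyperplanes of lines) contains more than $\ell=O(\sqrt n)$ of the $n$ points in $\mathbb{P}^5$'', and the paper proves (Lemma~\ref{le:mainle}, a generalisation of Lemma 4.5 of Cardinal et al.\ to an \emph{arbitrary} family of hyperplanes) that the number of $(d+1)$-tuples of points lying on a hyperplane of $\mathcal H$ is $O(n^d\log\ell)$; with $d=5$ this is the $O(n^5\log n)$ you need, and Spencer finishes as you say. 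The proof of that lemma is an induction on $d$ with generic projections, splitting tuples into those on $(d-2)$-flats, those spanning $\gamma$-degenerate hyperplanes (handled by the Elekes--T\'oth incidence bound), and those spanning non-degenerate hyperplanes; Guth--Katz plays no role. To repair your argument you would need to supply this tuple-counting lemma (or an equivalent), not a count of rich stabbers.
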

\addtocounter{theorem}{-1}
\endgroup
The proof involves lifting the set of lines to a set of points and hyperplanes in $\R^5$, and applying
the Ramsey-type result on points and hyperplanes due to Cardinal et al.~\cite{CTW14}.
The latter in turn relies on a point-hyperplane incidence bound due to Elekes and T\'oth~\cite{ET05}.

Finally, in Section~\ref{sec:linesreguliR3} we introduce the notion of a subset of lines in general position in $\R^3$
with respect to reguli, defined as loci of lines intersecting three pairwise skew lines. We use the Pach-Sharir
bound on incidences between points and curves in the plane~\cite{PS98} to obtain the following result.
\begingroup
\def\thetheorem{\ref{4inreg}}
\begin{theorem}
Let $L$ be a set of $n$ pairwise skew lines in $\R^3$. Then there are $\Omega (n^{2/3})$ lines on a regulus, or $\Omega (n^{1/3})$ lines, no 4-subset of which lie on a regulus.  
\end{theorem}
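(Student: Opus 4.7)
My plan is to reduce the problem to the intermediate Ramsey-type statement about points in the plane and a family of plane algebraic curves with $s$ degrees of freedom (mentioned in the abstract and proved earlier in Section~\ref{sec:linesreguliR3} via the Pach--Sharir incidence bound) and apply it with $s=3$. The exponents then match the theorem exactly: $n^{1-1/s}=n^{2/3}$ lines on a common curve (regulus), or $n^{1/s}=n^{1/3}$ points with at most $s=3$ on any curve, i.e.\ no $4$-subset on a regulus.

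To set up the reduction I would first parameterize the lines. After a generic rotation of coordinates, no line of $L$ is parallel to the $xy$-plane, so each $\ell\in L$ is determined by its two intersection points with the planes $\{z=0\}$ and $\{z=1\}$, giving a point $\phi(\ell)\in\R^4$. For any fixed regulus $Q$, one ruling of $Q$ meets these two planes in two conic curves, and $\phi$ sends that ruling to a $1$-dimensional algebraic curve in $\R^4$ of constant degree. I would then compose with a generic linear projection $\pi\colon\R^4\to\R^2$, obtaining $n$ distinct points in the plane and a family $\mathcal{C}$ of plane algebraic curves of bounded degree, such that $\pi(\phi(\ell))$ lies on the image of the ruling-curve of $Q$ if and only if $\ell$ lies on $Q$. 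Since three pairwise skew lines determine a unique regulus, three generic points in the plane lie on a bounded number of members of $\mathcal{C}$, so $\mathcal{C}$ indeed has $s=3$ degrees of freedom.

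Applying the intermediate planar result with $s=3$ then yields either $\Omega(n^{2/3})$ projected points on a common curve of $\mathcal{C}$, which lifts to $\Omega(n^{2/3})$ lines of $L$ on a single regulus, or $\Omega(n^{1/3})$ projected points with at most $3$ on any curve of $\mathcal{C}$, lifting to $\Omega(n^{1/3})$ lines of $L$ no $4$-subset of which lies on a regulus.

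The main obstacle is making the geometric reduction airtight. One must verify that a generic $\pi$ preserves incidences in both directions between our finite set of points and the finite set of reguli under consideration (no spurious coincidences), that the image family has exactly the $s=3$ degrees of freedom needed, and that members of $\mathcal{C}$ pairwise intersect in boundedly many points so that Pach--Sharir applies. The latter two properties should follow from Bezout-type bounds on the algebraic degree of reguli together with a dimension count, and the first from standard transversality for a generic projection; the bookkeeping is routine but essential to bring a genuinely $3$-dimensional problem under the planar incidence machinery.
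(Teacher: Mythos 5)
Your proposal is correct and follows essentially the same route as the paper: parameterize each line by its intersections with the planes $z=0$ and $z=1$ to get points in $\R^4$, map rulings of reguli to bounded-degree curves, apply a generic projection to $\R^2$, and invoke the planar result (Theorem~\ref{thm:gpcurve}) with $s=3$. The verifications you flag as remaining (incidence preservation under generic projection, three degrees of freedom via uniqueness of the ruling through three skew lines, and constant multiplicity type) are exactly the short checks the paper carries out.
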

\addtocounter{theorem}{-1}
\endgroup
We also explain how to use a line-regulus incidence bound due to Aronov et al.~\cite{AKS05} for an alternative proof of this result.

The large subsets whose existence our results guarantee can be found in polynomial time. 
In each case, a degenerate $t$-subset is incident to only one element of $\calQ$
(for example, three collinear points lie on only one line). Furthermore, the cliques given by our results are of a particular type:
all the elements intersect a single element of $\calQ$ (for example, a collinear set of points).
Thus the largest such clique in the hypergraph $H$ can be found in polynomial time by checking all the elements of $\calQ$ that
determine a degenerate $t$-subset (for example, all lines determined by the point set).
If the clique size is small, Tur\'an-type theorems yield an independent set of a guaranteed minimum size.
These theorems are constructive, hence the large independent set can be found efficiently.

\section{Preliminaries}
\label{sec:prelim}

In order to prove the existence of large independent sets in hypergraphs with no
large clique, we proceed in two steps. First, we use incidence bounds to get
upper bounds on the density of the (hyper)graph. Then we apply Tur\'an's Theorem or its hypergraph analogue
to find a lower bound on the size of the independent set. This is an extension of the method used
to prove Theorem~\ref{thm:gp} in~\cite{PW13,CTW14}. The use of incidence bounds is also reminiscent
from the technique used by Pach and Sharir for the repeated angle problem~\cite{pachsharir}.

The following lemma will allow us to quickly convert incidence bounds into density conditions. 
Recall that we consider two families $\calP$ and $\calQ$ with an incidence relation in $\calP\times\calQ$,
and that a $t$-subset $S$ of $\calP$ is said to be degenerate whenever there exists $q\in\calQ$ such that 
every $p\in S$ is incident to $q$.

\begin{lemma}\label{lem:nitoedges}
Let $P$ be a subset of $\calP$ with $|P|=n$, such that no element
of $\calQ$ is incident to more than $\ell$ elements of $P$.
Let us denote by $P_{\geq k}$ the number of elements of $\calQ$ incident to at least $k$ elements of $P$,
and suppose $P_{\geq k}\lesssim g(n)/k^a$ for some function $g$ and some real number $a$. Then the number of
degenerate $t$-subsets induced by $P$ is at most
$$
m\lesssim
\begin{cases}
g(n) & \mathrm{\ if\ } t < a, \\
g(n)\log\ell & \mathrm{\ if\ } t = a, \\
g(n)\ell^{t-a} & \mathrm{\ if\ } t > a.
\end{cases}
$$
\end{lemma}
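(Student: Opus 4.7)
The plan is to count degenerate $t$-subsets by summing their witnesses in $\calQ$. For each $q\in\calQ$, let $i_q$ denote the number of elements of $P$ incident to $q$; every degenerate $t$-subset is contained in the incidence set of at least one such $q$, so
$$ m \;\leq\; \sum_{q\in\calQ}\binom{i_q}{t} \;=\; \sum_{k=t}^{\ell} P_{=k}\binom{k}{t}, $$
where $P_{=k}$ counts the $q\in\calQ$ incident to exactly $k$ elements of $P$, and the summation stops at $\ell$ because $P_{=k}=0$ for $k>\ell$ by hypothesis. This may overcount subsets that are degenerate via several $q$, but an upper bound is what we need.

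The next step is to express the sum in terms of the quantities $P_{\geq k}$ controlled by the hypothesis. Using the combinatorial identity $\binom{k}{t}=\sum_{j=t}^{k}\binom{j-1}{t-1}$ (from $\binom{k}{t}-\binom{k-1}{t}=\binom{k-1}{t-1}$) and exchanging the order of summation, the sum rewrites as
$$ m \;\leq\; \sum_{j=t}^{\ell}\binom{j-1}{t-1}\,P_{\geq j} \;\lesssim\; g(n)\sum_{j=t}^{\ell} j^{\,t-1-a}, $$
after bounding $\binom{j-1}{t-1}\lesssim j^{\,t-1}$ (with $t$ a fixed constant) and plugging in $P_{\geq j}\lesssim g(n)/j^a$.

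To finish, I split according to the sign of $t-1-a$. If $t<a$, the exponent is strictly below $-1$ and the series $\sum_{j\geq t}j^{\,t-1-a}$ converges to an absolute constant, giving $m\lesssim g(n)$. If $t=a$, the sum is harmonic, $\sum_{j=t}^{\ell}j^{-1}=O(\log\ell)$, giving $m\lesssim g(n)\log\ell$. If $t>a$, the sum is dominated by its last term and is $\Theta(\ell^{\,t-a})$, giving $m\lesssim g(n)\,\ell^{\,t-a}$. The truncation of the sum at $\ell$ is exactly what keeps the last two cases finite, and it is legitimate because no element of $\calQ$ has more than $\ell$ incidences.

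There is no real obstacle here: the argument is essentially a dyadic-style (or Abel-type) summation. The only points requiring mild care are the combinatorial identity used to pass from $P_{=k}$ to $P_{\geq k}$, and the correct asymptotic evaluation of the power sum in the three regimes; the implicit constants depend on $t$ but not on $n$ or $\ell$.
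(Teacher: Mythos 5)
Your proof is correct and follows essentially the same route as the paper: both bound $m$ by summing $\binom{k}{t}$ over witnesses, pass from exact counts to the cumulative quantities $P_{\geq j}$ by an Abel-type interchange of summation (you via the exact hockey-stick identity, the paper via the bound $j^t \lesssim t\sum_{k\le j}k^{t-1}$), and then evaluate $\sum_{j\le \ell} j^{t-1-a}$ in the three regimes. No gaps; the treatment of the truncation at $\ell$ and the case analysis match the paper's argument.
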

\begin{proof}
Let $P_j$ be the number of elements of $\calQ$ incident to {\em exactly} $j$ elements of $P$.
Then
\begin{eqnarray*}
m & = & \sum_{j=t}^{\ell} P_j {j\choose t} < \sum_{j=1}^{\ell} P_j j^t < \sum_{j=1}^{\ell} P_j \left( t \sum_{k=1}^{j}k^{t-1}\right) 
 \simeq \sum_{k=1}^{\ell} k^{t-1} \left( \sum_{j=k}^{\ell} P_j\right) \\
  & = & \sum_{k=1}^{\ell} k^{t-1} P_{\geq k} \lesssim  g(n) \sum_{k=1}^{\ell} k^{t-1-a},
\end{eqnarray*}
where we use that $\sum_{k=1}^j k^{t-1}=j^t/t+O(j^{t-1})$, and $t=O(1)$.
The final sum simplifies differently depending on the relative values of $t$ and $a$.
\end{proof}

We recall the statement of Tur\'{a}n's Theorem.
\begin{theorem}[Tur\'{a}n~\cite{turan}]\label{thm:turan} 
Let $G$ be a graph with $n$ vertices and $m$ edges. Then $\alpha (G) \geq \frac{n}{\frac{2m}{n} + 1}$.
Thus if $m<n/2$ then $\alpha(G) > n/2$. Otherwise $\alpha(G) \geq n^2 /4m$.
\end{theorem}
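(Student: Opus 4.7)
The plan is to prove the Caro--Wei--type lower bound $\alpha(G)\ge\sum_{v}\frac{1}{d(v)+1}$ first, and then derive the stated bound $\alpha(G)\ge n/(\bar d+1)$ where $\bar d = 2m/n$ is the average degree, via Jensen's inequality applied to the convex function $x\mapsto 1/(x+1)$.

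For the Caro--Wei step I would argue probabilistically. Fix a uniformly random permutation $\pi$ of the vertex set $V$, and let $I_\pi\subseteq V$ consist of those vertices $v$ whose $\pi$-rank is smaller than that of every neighbor of $v$. Then $I_\pi$ is independent, since if $uv\in E(G)$ exactly one of $u,v$ precedes the other and so at most one of them can be in $I_\pi$. A single vertex $v$ lies in $I_\pi$ iff $v$ appears first among itself and its $d(v)$ neighbors, which happens with probability $1/(d(v)+1)$. By linearity of expectation,
\begin{equation*}
\mathbb{E}[|I_\pi|] \;=\; \sum_{v\in V}\frac{1}{d(v)+1},
\end{equation*}
so some permutation $\pi$ yields an independent set of at least this size. (Equivalently, one can argue constructively by greedily picking a minimum-degree vertex, adding it to $I$, deleting it together with its neighbors, and tracking the degree sum.)

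For the convexity step, apply Jensen's inequality to the convex function $f(x)=1/(x+1)$ on $[0,\infty)$:
\begin{equation*}
\frac{1}{n}\sum_{v\in V}\frac{1}{d(v)+1} \;\ge\; \frac{1}{\frac{1}{n}\sum_{v\in V}d(v) + 1} \;=\; \frac{1}{\bar d + 1} \;=\; \frac{1}{\frac{2m}{n}+1},
\end{equation*}
using $\sum_v d(v)=2m$. Multiplying by $n$ gives the claimed inequality $\alpha(G)\ge n/(2m/n+1)$. The two corollaries fall out by inspection: if $m<n/2$ then $2m/n<1$, so the denominator is less than $2$ and $\alpha(G)>n/2$; if $m\ge n/2$ then $2m/n+1\le 4m/n$, which yields $\alpha(G)\ge n\cdot(n/4m)=n^2/(4m)$.

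The only subtle point is the direction of Jensen's inequality, so I would double-check that $x\mapsto 1/(x+1)$ is indeed convex on $[0,\infty)$ (its second derivative $2/(x+1)^3$ is positive), giving the correct inequality $\mathbb{E}[f(X)]\ge f(\mathbb{E}[X])$. There is no real obstacle here; the result is classical and the argument above is essentially the textbook proof, so I do not foresee any technical difficulty beyond getting the inequality directions right.
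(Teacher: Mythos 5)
Your proof is correct. Note that the paper does not actually prove this statement---it is quoted as a classical result with a citation to Tur\'an---so there is no in-paper argument to compare against. Your derivation via the Caro--Wei bound (random permutation / linearity of expectation) followed by Jensen's inequality for the convex function $x\mapsto 1/(x+1)$ is the standard modern proof, and all the inequality directions and the two corollaries ($m<n/2$ gives denominator $<2$; $m\ge n/2$ gives $2m/n+1\le 4m/n$) check out.
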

The hypergraph version of this result was proved by Spencer.
\begin{theorem}[Spencer~\cite{S72}]\label{thm:spencer}
Let $H$ be a $t$-uniform hypergraph with $n$ vertices and $m$ edges. If $m<n/t$
then $\alpha (H)>n/2$. Otherwise
$$
\alpha (H) \geq \frac{t-1}{t^{t/(t-1)}} \frac n{(m/n)^{1/(t-1)}}.
$$
\end{theorem}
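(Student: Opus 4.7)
The plan is to apply the probabilistic deletion (``alteration'') method. I would select each of the $n$ vertices of $H$ independently with probability $p\in[0,1]$, letting $X$ denote the number of selected vertices and $Y$ the number of hyperedges whose entire $t$-element vertex set is selected. Linearity of expectation gives $\mathbb{E}[X]=pn$ and $\mathbb{E}[Y]=p^{t}m$. Deleting one vertex from each fully-selected hyperedge destroys every hyperedge contained in the remaining set, so what is left is independent and has size at least $X-Y$. By averaging, some realisation attains
$$\alpha(H)\;\geq\;\mathbb{E}[X-Y]\;=\;pn-p^{t}m,$$
and hence $\alpha(H)\geq\max_{p\in[0,1]}\bigl(pn-p^{t}m\bigr)$.

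Next I would optimise over $p$. Setting the derivative $n-tp^{t-1}m$ to zero, the unconstrained maximiser is $p^{*}=\bigl(n/(tm)\bigr)^{1/(t-1)}$, and the condition $p^{*}\leq 1$ is equivalent to $m\geq n/t$. This dichotomy drives the case split in the statement. If $m<n/t$, then $pn-p^{t}m$ is strictly increasing on $[0,1]$, so I take $p=1$ and obtain $\alpha(H)\geq n-m>n-n/t\geq n/2$ (using $t\geq 2$), which is the first case. If instead $m\geq n/t$, I substitute $p^{*}$ into the bound. Using $p^{*}\cdot (p^{*})^{t-1}m=p^{*}n/t$, the cross term collapses and
$$\alpha(H)\;\geq\;p^{*}n-(p^{*})^{t}m\;=\;p^{*}n\Bigl(1-\tfrac{1}{t}\Bigr)\;=\;\frac{t-1}{t}\cdot\frac{n^{t/(t-1)}}{(tm)^{1/(t-1)}},$$
which rearranges algebraically to $\dfrac{t-1}{t^{t/(t-1)}}\cdot\dfrac{n}{(m/n)^{1/(t-1)}}$, as claimed.

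The argument is essentially routine once the two ingredients (random selection plus alteration) are in place; the main points that require care are the case distinction driven by whether $p^{*}\in[0,1]$, and the constant-chasing needed to match the displayed form of the bound. I would also remark, although it is not part of the statement, that the random selection is easily derandomised by the method of conditional expectations, yielding a polynomial-time algorithm that produces the independent set; this is consistent with the constructive remark at the end of Section~1 of the paper.
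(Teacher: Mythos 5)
Your proof is correct: the random-selection-plus-deletion argument, the optimisation at $p^{*}=(n/(tm))^{1/(t-1)}$, the case split according to whether $p^{*}\leq 1$, and the final constant-chasing all check out and reproduce the stated bound exactly. The paper itself offers no proof of this theorem (it is quoted as a known result of Spencer with a citation), and your argument is essentially the standard alteration-method proof, which is also Spencer's original approach, so there is nothing to reconcile.
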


\section{Points and lines in $\R^3$}
\label{sec:pointslinesR3}

The recent resolution of Erd\H{o}s' distinct distance problem by Guth and Katz involves new 
bounds on the number of incidences between points and lines in $\R^3$~\cite{GK}. This breakthrough has
fostered research on point-line incidence bounds in space. In this section and the next, 
we exploit those recent results to obtain various new Ramsey-type statements on point-line incidence 
relations in space.

\subsection{General position with respect to lines}

Theorem~\ref{thm:gp} for $d=2$ states that in a set $P$ of
$n$ points in the plane there exist either $\sqrt{n}$ collinear points,
or $\Omega(\sqrt{n/ \log n})$ points with no three collinear. 
Payne and Wood~\cite{PW13} conjectured that the true size
should be $\Omega(\sqrt{n})$, but this small improvement has proven
elusive.

Here we consider the same question but with $\calP = \R^3$, 
$\calQ$ defined as the set of lines in $\R^3$, and $t=3$. 
Hence we consider that a set $P\subset\R^3$ is in general position when no 
three points are collinear.
So far this is the same question as in the planar case, since a
point set in higher dimensional space can always be projected to the
plane in a way that maintains the collinearity relation. However,
under a small extra assumption, namely that among the $n$ points in
$\R^3$, at most $n / \log n$ are coplanar, we are able to remove
the $\log n $ factor in the independent set. This sheds some light
on the nature of potential counterexamples to the conjecture of
Payne and Wood.

We will use the following result of Dvir and Gopi~\cite{DG15}, which is deduced from Guth and Katz~\cite{GK2}.
\begin{theorem}
  \label{thm:PkR3}
Given a set $P$ of $n$ points in $\R^3$, such that at most $s$ points are contained in a plane, the number $P_{\geq k}$ of lines containing at least $k$ points is $$ P_{\geq k} \lesssim \frac{n^2}{k^4}+\frac{ns}{k^3}+\frac{n}{k} .$$
\end{theorem}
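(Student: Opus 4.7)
I would adapt the polynomial partitioning method of Guth and Katz, which established their rich-point bound for lines in $\R^3$, to the dual problem of bounding rich lines in a point set. Let $L$ denote the set of lines containing at least $k$ points of $P$, so that $|L|=P_{\geq k}$; counting incidences from the line side gives $I(P,L)\geq k|L|$, and the task is to convert this into an upper bound on $|L|$.

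The first step is to apply polynomial partitioning in $\R^3$: choose a polynomial $p\in\R[x,y,z]$ of degree $D$ (to be tuned) whose zero set $Z(p)$ partitions $\R^3\setminus Z(p)$ into $O(D^3)$ open cells, each meeting at most $n/D^3$ points of $P$. Split $L$ into $L_{\mathrm{in}}$, the lines lying in $Z(p)$, and $L_{\mathrm{out}}$, the rest. Any line in $L_{\mathrm{out}}$ meets $Z(p)$ in at most $D$ points and hence visits at most $D+1$ cells, so trivial per-cell incidence counts summed over cells, combined with a suitable choice of $D$, yield the leading term $|L_{\mathrm{out}}|\lesssim n^2/k^4$.

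For $L_{\mathrm{in}}$, decompose $Z(p)$ into its irreducible components. Planar components contain at most $s$ points of $P$, and the classical planar Szemerédi--Trotter bound supplies at most $O(s^2/k^3+s/k)$ lines rich in $k$ of those points per plane; summing over the at most $D$ planar components produces contributions of order $ns/k^3+n/k$. Non-planar algebraic components of degree at most $D$ are handled by a Bezout / ruled-surface argument, which restricts the number of $k$-rich lines that can lie on each component; the total from this case can be absorbed into the three stated terms.

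The main obstacle is controlling lines on irreducible non-planar ruled components of $Z(p)$, where the coplanar bound $s$ does not apply directly: quadrics and higher-degree ruled surfaces carry infinite families of lines, and only the truly three-dimensional distribution of $P$ prevents too many of them from being $k$-rich. This is precisely the step where the Dvir--Gopi refinement of the Guth--Katz framework must be invoked, using the hypothesis that no plane carries more than $s$ points of $P$ to rule out large rich line families even on such special surfaces. Summing the three contributions and optimizing $D$ then yields the claimed bound $P_{\geq k}\lesssim n^2/k^4+ns/k^3+n/k$.
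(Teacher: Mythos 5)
There is a genuine gap, and it is essentially circularity. The statement you are proving \emph{is} the Dvir--Gopi result: the paper does not prove it at all, but imports it from \cite{DG15}, where it is deduced from the Guth--Katz machinery of \cite{GK2}. Your sketch defers exactly the hard case --- controlling $k$-rich lines on non-planar (e.g.\ doubly ruled) components of the partitioning polynomial's zero set --- to ``the Dvir--Gopi refinement of the Guth--Katz framework''. At that point the argument assumes what it set out to prove, so nothing has been established beyond what a citation would give.

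There is also a quantitative gap in the cellular step: ``trivial per-cell incidence counts'' cannot yield the leading term $n^2/k^4$. A line not contained in $Z(p)$ meets at most $D+1$ cells, hence is $\Omega(k/D)$-rich in some cell; applying Szemer\'edi--Trotter in each cell (with $\lesssim n/D^3$ points) and summing over $O(D^3)$ cells gives only $\lesssim n^2/k^3 + nD/k$, \emph{for every} choice of $D$ --- the exponent $4$ is out of reach of this naive count and genuinely requires three-dimensional incidence technology. Your planar bookkeeping is also off: Szemer\'edi--Trotter in a plane with $n_i\le s$ points gives $\lesssim n_i^2/k^3+n_i/k$ rich lines, and summing $s^2/k^3$ over up to $D$ planar components gives $Ds^2/k^3$, not $ns/k^3$; you need to use $n_i^2\le s\,n_i$ together with a bound of the form $\sum_i n_i\lesssim n$. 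A clean, non-circular route (and presumably what ``deduced from Guth and Katz'' means) is the following: let $m=P_{\geq k}$ and let $L$ be the set of $k$-rich lines, so $I(P,L)\ge km$. Any plane contains at most $s$ points of $P$, so by planar Szemer\'edi--Trotter it contains at most $q\lesssim s^2/k^3+s/k$ lines of $L$. Now apply the Guth--Katz point--line incidence bound in $\R^3$ for line sets with at most $q$ coplanar lines, $I(P,L)\lesssim n^{1/2}m^{3/4}+n^{2/3}m^{1/3}q^{1/3}+n+m$, and solve $km\lesssim n^{1/2}m^{3/4}+n^{2/3}m^{1/3}q^{1/3}+n+m$ for $m$; the three cases give $m\lesssim n^2/k^4$, $m\lesssim ns/k^3$ (the $s/k$ part of $q$ is absorbed since $ns^{1/2}/k^2$ is the geometric mean of $ns/k^3$ and $n/k$), and $m\lesssim n/k$, which is exactly the claimed bound.
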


\begin{theorem}
\label{thm:gpR3}
Any set of $n$ points in $\R^3$ such that at most $n / \log n $ of the points lie in a 
plane contains either $\sqrt{n}$ collinear points or $\Omega(\sqrt{n})$ with no three collinear.
\end{theorem}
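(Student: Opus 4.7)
The plan is to follow exactly the two-step recipe laid out in Section~\ref{sec:prelim}: first use the Dvir-Gopi incidence bound (Theorem~\ref{thm:PkR3}) together with Lemma~\ref{lem:nitoedges} to control the number of collinear triples, then apply Spencer's hypergraph Tur\'an bound (Theorem~\ref{thm:spencer}) with $t=3$ to extract a large independent set, i.e.\ a set with no three collinear points.

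More concretely, I would set $\calP=\R^3$, let $\calQ$ be the set of lines in $\R^3$, and take $t=3$. If some line already contains $\sqrt{n}$ of the points, we are done, so assume $\ell < \sqrt{n}$ as the maximum number of collinear points. The hypothesis of the theorem supplies $s \le n/\log n$ coplanar points, so Theorem~\ref{thm:PkR3} gives
\[
P_{\geq k} \;\lesssim\; \frac{n^2}{k^4} \;+\; \frac{n^2}{k^3 \log n} \;+\; \frac{n}{k}.
\]
Now apply Lemma~\ref{lem:nitoedges} to each term separately with $t=3$. The first term has $a=4>t$, contributing $O(n^2)$ degenerate triples. The third term has $a=1<t$ and $\ell<\sqrt{n}$, contributing $O(n \cdot \ell^{2}) = O(n^2)$. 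The middle term is the interesting one: here $a=t=3$, so the lemma produces an extra $\log \ell$ factor, yielding $O\!\left(\tfrac{n^2}{\log n}\cdot \log\sqrt{n}\right) = O(n^2)$. The precise reason the theorem's coplanarity assumption is $n/\log n$ is visible right here: it is exactly what is needed to cancel the logarithmic loss from the $t=a$ case of Lemma~\ref{lem:nitoedges} in the middle term.

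Summing the three contributions, the $3$-uniform hypergraph $H$ of collinear triples satisfies $|E(H)| \lesssim n^2$. Plugging $m = O(n^2)$ and $t=3$ into Spencer's bound gives
\[
\alpha(H) \;\gtrsim\; \frac{n}{(m/n)^{1/2}} \;\gtrsim\; \frac{n}{\sqrt{n}} \;=\; \sqrt{n},
\]
which is exactly the desired $\Omega(\sqrt{n})$ subset with no three collinear.

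The main obstacle is essentially bookkeeping: making sure the three terms of the Dvir-Gopi bound are handled separately by Lemma~\ref{lem:nitoedges} (since the lemma is stated for a single-term bound $g(n)/k^a$), and verifying that the coplanarity hypothesis $s \le n/\log n$ is precisely calibrated to absorb the $\log\ell$ arising from the $t=a$ regime. Once that matching is spelled out, the rest is a direct application of the two theorems cited in Section~\ref{sec:prelim}.
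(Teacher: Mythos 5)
Your proposal is correct and is essentially the paper's own proof: apply Lemma~\ref{lem:nitoedges} term-by-term to the Dvir--Gopi bound of Theorem~\ref{thm:PkR3} with $t=3$, $\ell\le\sqrt{n}$ and $s\le n/\log n$, obtain $m\lesssim n^2$ degenerate triples, and conclude via Spencer's bound (Theorem~\ref{thm:spencer}). Your observation that the hypothesis $s\le n/\log n$ is calibrated precisely to absorb the $\log\ell$ loss in the $t=a$ case matches the paper's computation $m\lesssim n^2+ns\log\ell+n\ell^2$.
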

\begin{proof}
  We apply Lemma~\ref{lem:nitoedges} on each term of the bound in Theorem~\ref{thm:PkR3}. We obtain that the number of degenerate 3-subsets of points is
  $$
  m \lesssim n^2 + ns \log \ell + n\ell^2,
  $$
  where $\ell = \sqrt{n}$ and $s=n/ \log n$. Hence the dominating term is $n^2$. Applying Theorem~\ref{thm:spencer} yields an independent set of size $\Omega(\sqrt{n})$.
\end{proof}

In fact, this theorem holds in $\R^d$ for $d>3$. To see this, we take a generic projection of $\R^d$ onto $\R^3$.
The condition that at most $n/\log n$ lines are coplanar remains true under a generic projection. 

\subsection{Line intersection graphs in $\R^3$}

We now consider the setting in which the family $\calP$ is the set of lines in $\R^3$ and
$\calQ = \R^3$. The first subcase we consider is $t=2$, or in other words, intersection graphs of lines.
Note that in an intersection graph of lines in $\R^3$,
every clique of size $k\geq 2$ corresponds either to a subset of $k$ lines
having a common intersection point, or to a subset of $k$ lines
lying in a plane. However, $k$ lines lying in a plane do not form a clique
if some of them are parallel.

We consider a set $L$ of $n$ lines in $\R^3$, such that no
more than $\ell$ lines intersect in a point, and at most $s$ lines lie
in a common plane or a \emph{regulus}. We recall that a regulus is a
degree two algebraic surface, which is the union of all the lines in
$\R^3$ that intersect three pairwise-skew lines in $\R^3$. It is a
\emph{doubly-ruled} surface; each point on a regulus is incident to
precisely two lines fully contained in the regulus. Moreover, there
are two \emph{rulings} for the regulus; every line from one ruling
intersects every line from the other ruling, and does not intersect
any line from the same ruling.

We first recall two important theorems of Guth and Katz~\cite{GK2}.
In what follows, $P_{\ge k}$ denotes the number of points incident to at least $k$ lines in $L$.

\begin{theorem} [\protect{\cite[Theorem 4.5]{GK2}}]
\label{th:gk21} If $L$ is a set of $n$ lines, so that no plane contains more than $s$ lines, then for $k \ge 3$ we have
$$
P_{\ge k} \lesssim \frac {n^{3/2}} {k^2} +
\frac {n s} {k^3} + \frac n k.
$$
\end{theorem}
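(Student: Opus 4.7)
The plan is to follow the polynomial-partitioning strategy of Guth and Katz. First I would apply the polynomial ham-sandwich theorem to produce $p\in\R[x,y,z]$ of degree $D$, to be optimized later, whose real zero set $Z(p)$ partitions $\R^3\setminus Z(p)$ into $O(D^3)$ open cells, each of which is entered by $O(n/D^2)$ lines of $L$. This per-cell count uses the B\'ezout-type fact that any line not contained in $Z(p)$ meets $Z(p)$ in at most $D$ points and hence enters at most $D+1$ cells, so the total line--cell incidence is $O(nD)$, and an averaging argument gives the bound per cell.

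Next I would split the $k$-rich points into those inside the cells and those lying on $Z(p)$. For a single cell meeting $n_i$ lines, each $k$-rich point there receives $k$ incidences from those $n_i$ lines alone, so after a generic projection to the plane Szemer\'edi--Trotter supplies at most $O(n_i^{3/2}/k^2 + n_i/k)$ such points. Summing over the $O(D^3)$ cells with $\sum_i n_i\lesssim nD$ and applying H\"older yields a cell contribution that, when $D\asymp n^{1/2}/k$, gives the targeted $n^{3/2}/k^2 + n/k$ terms of the bound.

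The main obstacle is controlling the $k$-rich points that actually lie on $Z(p)$. Here I would partition the (at least $k$) lines through such a point $q$ into those crossing $Z(p)$ transversally and those contained in $Z(p)$. The transversal lines contribute at most $D$ intersection points with $Z(p)$ per line, which is a global budget of $O(nD)$ line--surface incidences and feeds back into the cell-side balancing. The delicate step is handling the lines contained in $Z(p)$: this is where I would invoke the Cayley--Salmon machinery used by Guth and Katz. Any irreducible component of $Z(p)$ containing more than $\Theta(D^2)$ lines of $L$ is either a plane or a ruled surface, and the locus of \emph{flat} points (where the second fundamental form vanishes along a contained line) is cut out by the auxiliary flecnode polynomial of degree $O(D)$. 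The plane hypothesis that no plane contains more than $s$ lines of $L$ is used exactly to rule out the trivial all-in-one-plane configuration, and a case analysis on whether $q$ is flat and whether its incident lines lie in a common ruling produces the $ns/k^3$ term.

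Finally, choosing $D\asymp n^{1/2}/k$ balances the cell contribution against the surface contribution, producing $P_{\ge k}\lesssim n^{3/2}/k^2 + ns/k^3 + n/k$. The step I expect to be the main obstacle is by far the flecnode/ruled-surface analysis on $Z(p)$: this is the heart of the Guth--Katz argument, and it is precisely there that the plane parameter $s$ enters the final bound. Everything else---the partitioning, the cell-by-cell Szemer\'edi--Trotter application, and the final optimization of $D$---is routine once that algebraic geometry input is in hand.
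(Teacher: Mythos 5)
There is nothing in the paper to compare against here: Theorem~\ref{th:gk21} is quoted verbatim from Guth and Katz (\cite[Theorem 4.5]{GK2}) and the paper gives no proof of it, so the only meaningful comparison is with the original Guth--Katz argument, which is indeed the polynomial-partitioning proof you outline. Your skeleton is right in outline: a partitioning polynomial of degree $D \approx n^{1/2}/k$, Szemer\'edi--Trotter applied inside the $O(D^3)$ cells after a generic projection, and a separate analysis of the $k$-rich points lying on $Z(p)$. (Two small slips on the cell side: the partitioning theorem equidistributes points, not lines, so the $O(n/D^2)$ lines-per-cell bound holds only on average via the $O(nD)$ line--cell incidence count, which you do acknowledge; and the planar rich-point bound from Szemer\'edi--Trotter is $O(n_i^2/k^3 + n_i/k)$, not $O(n_i^{3/2}/k^2 + n_i/k)$ --- with $D\approx n^{1/2}/k$ the summation still comes out to $n^{3/2}/k^2$, so this does not derail the computation.)

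The genuine gap is exactly where you place it, but your description of how to fill it points at the wrong tool. For the $k\ge 3$ statement, Guth and Katz do \emph{not} run the flecnode/Cayley--Salmon ruled-surface classification; that machinery is the engine of their $k=2$ bound under the plane-and-regulus hypothesis (the statement quoted in this paper as Theorem~\ref{th:gk22}). For $k\ge 3$ the key observation is that a point of $Z(p)$ incident to three or more lines of $L$ contained in $Z(p)$ must be either a \emph{critical} point or a \emph{flat} point of $Z(p)$; the flat points are cut out by three polynomials of degree $O(D)$ coming from the second fundamental form, not by the flecnode polynomial. One then bounds the critical lines by $D(D-1)$ via B\'ezout, shows that flat lines lie in planar components of $Z(p)$, and it is inside those planes that the hypothesis of at most $s$ lines per plane, combined with planar Szemer\'edi--Trotter, produces the $ns/k^3$ term --- a derivation your sketch waves at (``a case analysis \dots produces the $ns/k^3$ term'') without carrying out. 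So as a proof the proposal leaves the decisive algebraic step as a black box and misattributes it; as a roadmap to the literature it is close, but the route through critical and flat points is the one that actually proves this theorem.
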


\begin{theorem}[\protect{\cite[Theorem 2.11]{GK2},\cite{SS3d}}]
\label{th:gk22} If $L$ is a set of $n$ lines, so that no plane or regulus contains more than $s$ lines, then $P_{\ge 2} \lesssim
n^{3/2} + ns$.
\end{theorem}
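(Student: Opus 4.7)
The plan is to adapt the polynomial partitioning method of Guth and Katz. First I would fix a parameter $D = \Theta(\sqrt{n})$ and invoke the polynomial ham-sandwich theorem to produce a polynomial $p\in\R[x,y,z]$ of degree at most $D$ whose zero set $Z(p)$ partitions $\R^3\setminus Z(p)$ into $O(D^3)$ open cells, each meeting at most $O(n/D^2)$ of the lines of $L$. The bound per cell uses that a line not lying in $Z(p)$ traverses at most $D+1$ cells, so the total line-cell incidence count is $O(nD)$ and an averaging argument caps the worst cell.

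Next I would split the intersection points into three groups. Intersection points lying off $Z(p)$ each sit in a single cell; a cell containing $m_i$ lines contributes at most $\binom{m_i}{2}$ such points, and summing over cells with $m_i \le O(n/D^2)$ together with $\sum_i m_i \lesssim nD$ yields a total of $O(n^2/D) = O(n^{3/2})$ when $D=\sqrt{n}$. Intersection points on $Z(p)$ through which some line \emph{not} contained in $Z(p)$ passes contribute at most $nD = O(n^{3/2})$, since each such line meets $Z(p)$ in at most $D$ points.

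The hard step is bounding intersection points on $Z(p)$ formed entirely by lines fully contained in $Z(p)$. I would decompose $Z(p)$ into irreducible components and analyse each according to its line content. A Cayley--Salmon style argument together with the flecnode polynomial shows that an irreducible surface of degree $e$ carrying more than a constant multiple of $e^2$ lines must be ruled, and a doubly ruled surface is necessarily a plane or a quadric carrying a regulus. The hypothesis of the theorem caps the number of lines on any such plane or regulus by $s$, so intersections among lines lying in these components contribute at most $O(ns)$ in total (for instance, two rulings of a regulus cross in only one point per pair, and each line meets at most $s$ others of its component). The remaining components are singly ruled with bounded line content $O(e^2)$, and summing $\sum e \le D = O(\sqrt{n})$ keeps this contribution inside $O(n^{3/2})$.

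The principal obstacle is precisely this structural step: one needs real-algebraic geometry -- Bezout-type bounds, the flecnode polynomial, and the classification of doubly ruled surfaces -- to isolate planes and reguli as the only ways to pack more than $s$ lines into a single component, and to control the mutual intersections of the lines lying on the remaining singly ruled components. This is the technical heart of the Guth--Katz paper and its refinement by Sharir and Solomon, and I would defer to those arguments rather than reprove them in a sketch.
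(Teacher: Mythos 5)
This statement is not proved in the paper at all: it is imported verbatim from Guth--Katz (Theorem 2.11) and its refinement by Sharir--Solomon, so there is no internal proof to compare against. Your sketch follows exactly the route of those cited sources -- degree-$\Theta(\sqrt n)$ polynomial partitioning, the $O(n^2/D)=O(n^{3/2})$ cellular count, the $O(nD)$ count for lines crossing $Z(p)$, and the Cayley--Salmon/flecnode analysis of lines inside $Z(p)$ with planes and reguli capped at $s$ lines -- so in that sense it is the ``same approach'' and is fine as an outline. One caveat on the only part you state in your own words: for the singly ruled components, knowing that a degree-$e$ component carries $O(e^2)$ lines and that $\sum e \le D$ does not by itself bound the number of pairwise intersections among those lines (it only bounds the number of lines); the actual argument needs the Guth--Katz lemma that on an irreducible ruled surface of degree $e$ that is neither a plane nor a regulus, every line meets at most $O(e)$ other lines of the surface (apart from a bounded number of exceptional lines), which then gives a contribution $O(n\sum e)=O(nD)=O(n^{3/2})$. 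Since you explicitly defer to Guth--Katz and Sharir--Solomon for this structural core, the proposal is acceptable as a sketch, but that is the step where the quadratic-in-$e$ line count alone would not close the argument.
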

Note the difference between the two statements: the assumption that no regulus contains
more than $s$ lines is required for the case $k=2$ only.

Applying Lemma~\ref{lem:nitoedges} to the bounds in Theorems~\ref{th:gk21} and~\ref{th:gk22} yields the following.
\begin {proposition}
\label{pr:gk2} Given a set $L$ of $n$ lines, so that no plane or
regulus contains more than $s$ lines, and no point is incident to
more than $\ell$ lines of $L$, the number of line-line incidences is
$O(n^{3/2}\log \ell + ns + n\ell)$.
\end {proposition}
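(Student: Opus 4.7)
\medskip
\noindent\textbf{Proof plan.}
The plan is to view line-line incidences as a special instance of the framework from Lemma~\ref{lem:nitoedges} with $\calP = L$, $\calQ = \R^3$, $t=2$, and with the incidence relation being point-on-line. Under this interpretation, a degenerate $2$-subset is a pair of intersecting lines, and the total count of such pairs equals $\sum_p \binom{m_p}{2}$, where $m_p$ denotes the number of lines of $L$ through a point $p$ and the sum ranges over intersection points of $L$. By hypothesis we have $m_p \le \ell$.

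Next, I would feed the bound from Theorem~\ref{th:gk21}, namely
\[
P_{\ge k} \lesssim \frac{n^{3/2}}{k^2} + \frac{ns}{k^3} + \frac{n}{k},
\]
term-by-term into Lemma~\ref{lem:nitoedges}. Each of the three summands plays the role of $g(n)/k^a$ in the lemma with the common parameter $t=2$, and yields a contribution according to whether $t<a$, $t=a$ or $t>a$: the first term ($a=2$) hits the logarithmic case and contributes $O(n^{3/2}\log\ell)$; the second term ($a=3$) is in the sub-critical regime and contributes $O(ns)$; and the third term ($a=1$) is super-critical and contributes $O(n\ell)$.

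The one subtlety is that Theorem~\ref{th:gk21} is only stated for $k\ge 3$, whereas the summation in Lemma~\ref{lem:nitoedges} ranges from $k=t=2$. I would isolate the $k=2$ contribution, namely (up to constants) $P_{\ge 2}$, and bound it using the stronger assumption on reguli via Theorem~\ref{th:gk22}: $P_{\ge 2} \lesssim n^{3/2} + ns$. This absorbs into the bound already obtained for $k\ge 3$, since both $n^{3/2}$ and $ns$ are dominated by $n^{3/2}\log\ell + ns$. Summing the contributions from $k=2$ and from $k\ge 3$ gives the claimed bound $O(n^{3/2}\log\ell + ns + n\ell)$.

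The only real ``obstacle'' is ensuring that the $k=2$ case is handled cleanly, which is precisely why Theorem~\ref{th:gk22} (requiring the regulus assumption) is invoked alongside Theorem~\ref{th:gk21}; the rest is routine bookkeeping in the proof of Lemma~\ref{lem:nitoedges}.
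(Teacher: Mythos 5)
Your proposal is correct and follows essentially the same route as the paper, which simply applies Lemma~\ref{lem:nitoedges} with $t=2$ to the bounds of Theorems~\ref{th:gk21} and~\ref{th:gk22}. Your explicit treatment of the $k=2$ contribution via Theorem~\ref{th:gk22} is exactly the reason the paper invokes both theorems (and why the regulus hypothesis appears in the statement), so the bookkeeping matches the intended argument.
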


\begin{lemma}
\label{lem:iglines3}
Consider a set $L$ of $n$ lines in $\R^3$, such that no plane contains more than $s$ lines, and no point is incident to
more than $\ell$ lines of $L$.
Let $G$ be the intersection graph $L$. 
If $s,\ell \lesssim n^{1/2}$, then $\alpha (G) \gtrsim \sqrt{n} / \log \ell$.
Moreover, if $r:=\max\{s,\ell\} \gtrsim n^{\frac{1}{2}+\epsilon}$ for some $\epsilon >0$, then $\alpha (G) \gtrsim n / r$.
\end{lemma}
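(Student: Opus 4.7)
The plan is to combine the line--line incidence bound of Proposition~\ref{pr:gk2} with Turán's theorem (Theorem~\ref{thm:turan}). The edges of $G$ correspond bijectively to the incident pairs of lines of $L$, so the number of edges $m$ equals the number of line--line incidences. Under the hypotheses of the lemma, Proposition~\ref{pr:gk2} yields
\[
m = O\bigl(n^{3/2}\log\ell + ns + n\ell\bigr),
\]
and Turán's theorem then delivers $\alpha(G) \gtrsim n^2/m$ as soon as $m \gtrsim n$. So the whole argument reduces to identifying which of the three terms in the bound dominates in each regime.

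In the first regime, $s,\ell \lesssim \sqrt{n}$, so both $ns$ and $n\ell$ are $O(n^{3/2})$ and the $n^{3/2}\log\ell$ term dominates. Hence $m \lesssim n^{3/2}\log\ell$, and Turán gives $\alpha(G) \gtrsim n^2/(n^{3/2}\log\ell) = \sqrt{n}/\log\ell$, as claimed.

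In the second regime, write $r := \max\{s,\ell\}$ and assume $r \gtrsim n^{1/2+\epsilon}$. Then $nr \gtrsim n^{3/2+\epsilon}$ while $n^{3/2}\log\ell = O(n^{3/2}\log n)$, and because $n^{\epsilon}$ dominates any polylogarithmic factor for $n$ large enough, the term $nr$ dominates the edge count. Thus $m \lesssim nr$, and Turán gives $\alpha(G) \gtrsim n^2/(nr) = n/r$.

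There is no genuine conceptual obstacle: the proof is essentially a mechanical combination of two tools already in place. The only things requiring care are (i) verifying that $m \gtrsim n$ so the non-trivial form of Turán applies, since in the opposite case $\alpha(G) > n/2$, which is even stronger than what we claim, and (ii) correctly tracking which of the three terms of Proposition~\ref{pr:gk2} dominates, since it is precisely this dichotomy that produces the two different exponents appearing in the conclusion.
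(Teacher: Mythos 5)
Your reduction to Proposition~\ref{pr:gk2} plus Tur\'an is the right skeleton, but there is a genuine gap: Proposition~\ref{pr:gk2} (through Theorem~\ref{th:gk22}, which controls $P_{\ge 2}$) requires that no plane \emph{or regulus} contains more than $s$ lines, whereas the lemma's hypotheses only bound the number of coplanar lines and the number of lines through a point. So you are not entitled to the edge bound $m = O(n^{3/2}\log\ell + ns + n\ell)$ ``under the hypotheses of the lemma,'' and in fact that bound is false in this generality: take all $n$ lines on a single regulus, $n/2$ in each ruling. Then $\ell=2$ (at most two lines of a regulus pass through any point) and $s=2$ (a plane meets the quadric in a conic, hence contains at most two of its lines), yet roughly $n^2/4$ pairs of lines intersect, far exceeding $O(n^{3/2})$. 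The conclusion of the lemma still holds in that example, but only because one ruling is itself an independent set of size $n/2$ --- not because of the incidence bound.

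The paper's proof fixes exactly this point before doing what you do: if some regulus contains at least $n^{1/2}$ lines of $L$, one of its two rulings contains at least half of them, and lines within a single ruling are pairwise disjoint, so $\alpha(G)\gtrsim n^{1/2}$, which is at least as strong as either claimed conclusion (in the second regime $n/r\lesssim n^{1/2-\epsilon}$). Otherwise every regulus contains $\lesssim n^{1/2}$ lines, so Proposition~\ref{pr:gk2} applies with the plane/regulus parameter replaced by $\max\{s,n^{1/2}\}$, and from there your case analysis of which term dominates, followed by Theorem~\ref{thm:turan}, goes through verbatim (with the small check that $\max\{s,n^{1/2}\}\le \max\{r,n^{1/2}\}\lesssim r$ in the second regime). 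Add that regulus dichotomy as a first step and your argument is complete.
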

\begin{proof}
If there is some regulus containing at least $n^{1/2}$ lines, we
divide the lines into the two rulings of the regulus. One
ruling contains at least half the lines, and as the lines in
one ruling do not intersect one another, it follows that
$\alpha(G)\gtrsim n^{1/2}$. We may therefore assume that the number of
lines contained in a common regulus is at most $n^{1/2}$.

If $s,\ell \leq n^{1/2}$, the first term in the bound in Proposition~\ref{pr:gk2} dominates, and applying Theorem~\ref{thm:turan} gives
$\alpha (G) \gtrsim \sqrt{n} / \log \ell$. If $r\geq n^{\frac{1}{2}+\epsilon}$, one of the latter terms dominates, and we apply Theorem~\ref{thm:turan} to get 
$\alpha(G)\gtrsim n/r$.
\end{proof}

\begin{theorem}
\label{thm:igl}
The intersection graph of $n$ lines in $\R^3$ has a clique or independent set of size $\Omega(n^{1/3})$. 
\end{theorem}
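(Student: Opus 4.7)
The plan is to case-split on how many lines of $L$ can coincide in a single point, plane, or regulus. Let $\ell$ be the maximum number of lines of $L$ through a common point, let $p$ be the maximum number of lines of $L$ contained in a common plane, and let $r$ be the maximum number of lines of $L$ contained in a common regulus. I plan to show that whenever one of $\ell$, $p$, or $r$ exceeds a suitable threshold, a clique or independent set of size $\Omega(n^{1/3})$ can be read off directly, while otherwise Proposition~\ref{pr:gk2} combined with Tur\'an's theorem supplies the required independent set.

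First I would dispose of the three ``heavy'' cases. If $\ell \ge n^{1/3}$, then $n^{1/3}$ lines of $L$ share a common point and form a clique in the intersection graph. If $p \ge n^{2/3}$, I would take a maximum coplanar subset of $L$ and partition it into classes of pairwise parallel lines: either some class contains $\sqrt{p} \ge n^{1/3}$ lines, which are pairwise non-intersecting and thus form an independent set, or there are $\sqrt{p} \ge n^{1/3}$ parallel classes, and picking one representative from each yields $n^{1/3}$ coplanar pairwise non-parallel lines, which therefore pairwise intersect and form a clique. If $r \ge 2n^{1/3}$, then the larger ruling of the corresponding regulus contains at least $n^{1/3}$ lines of $L$, which are pairwise skew and yield an independent set, exactly as in the proof of Lemma~\ref{lem:iglines3}.

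In the remaining case we have $\ell < n^{1/3}$ and $p,r < n^{2/3}$, so Proposition~\ref{pr:gk2} applied with $s := \max\{p,r\} < n^{2/3}$ bounds the number of edges of the intersection graph $G$ by
\[
m \;\lesssim\; n^{3/2}\log \ell \,+\, n s \,+\, n \ell \;\lesssim\; n^{3/2}\log n + n^{5/3} + n^{4/3} \;\lesssim\; n^{5/3},
\]
using that $n^{3/2}\log n = o(n^{5/3})$. Since $m/n \gtrsim 1$, Tur\'an's Theorem~\ref{thm:turan} then gives $\alpha(G) \gtrsim n^2/m \gtrsim n^{1/3}$, completing the argument.

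The main difficulty is calibrating the three thresholds so that each heavy case individually delivers a clique or independent set of size $\Omega(n^{1/3})$ while the residual edge count is still at most $n^{5/3}$. The plane case is the most delicate: coplanarity alone does not produce a clique in the intersection graph (parallel lines in a plane are non-adjacent), and the short Dilworth-style argument on parallel classes is what bridges the gap, explaining why the plane threshold must be $n^{2/3}$ (the square of the target) while the point and regulus thresholds are only $n^{1/3}$.
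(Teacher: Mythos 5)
Your proposal is correct and follows essentially the same route as the paper: the sparse case is handled by Proposition~\ref{pr:gk2} plus Tur\'an's Theorem~\ref{thm:turan}, the regulus case by splitting into rulings (as in Lemma~\ref{lem:iglines3}), and the plane case by the same parallel-class dichotomy giving either an independent set or a clique; you merely inline Lemma~\ref{lem:iglines3} with slightly different (but equally valid) thresholds.
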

\begin{proof}
Suppose that such a graph $G$ has $\alpha(G) \ll n^{1/3}$. Then by
Lemma~\ref{lem:iglines3}, $\max\{s,\ell\}\gtrsim n^{2/3}$. If $\ell
\gtrsim n^{2/3}$ we are done, so $s \gtrsim n^{2/3}$. Therefore, we may
assume that there is a plane containing $n^{2/3}$ lines. 
Divide these lines into classes of pairwise parallel lines. 
If some class contains at least $n^{1/3}$ lines, we have $\alpha(G) \gtrsim n^{1/3}$.
Otherwise, there are at least $n^{1/3}$ different parallel classes.
Choosing one line from each class yields a clique of size $n^{1/3}$. 
\end{proof}

Note that the Erd\H os-Hajnal property for intersection graphs of lines in $\R^3$ can be directly established from
Theorem~\ref{thm:alg} by Alon et al.~\cite{APPRS05}, but with a much smaller exponent. In their setting, we can represent
the intersection relation between lines using Pl\"ucker coordinates in $\R^5$, and using two inequalities. This yields
$k=2$ and $Q=5$, and an Erd\H os-Hajnal exponent of $1/24$. Although it is likely that it can be improved by shortcutting steps in the general proof,
any exponent we would get would still be far from 1/3. 

We now make a connection with intersection graphs of lines in space and line graphs. Recall that the line graph of a graph $G$
has the set of edges $E(G)$ as vertex set, and an edge between two edges of $G$ whenever they are incident to the same vertex of $G$.
Observe that for every graph $G$, the line graph of $G$ can be represented as the intersection graph of lines in $\R^3$ by drawing $G$ on
  a vertex set in general enough position in $\R^3$, and extending the edges of the drawing to lines.
By applying Vizing's Theorem, which says that the edge chromatic number of every graph is at most $\Delta +1$, we may see that the class 
of line graphs has the Erd\"os--Hajnal property with exponent $1/2$.
The question of the exact Erd\"os--Hajnal exponent for intersection graphs of lines in $\R^3$ remains open -- it lies somewhere between $1/3$ and $1/2$.

Finally we note that for sets of lines in projective space, coplanar sets of lines always form a clique. The following stronger result can be directly obtained.
\begin{theorem}\label{thm:projlineint}
For every intersection graph $G$ of $n$ lines in $\mathbb{P}^3$, either $\omega (G) \geq \sqrt{n}$ or $\alpha (G) = \Omega (\sqrt{n} / \log n)$.
\end{theorem}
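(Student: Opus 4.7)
The plan is to run exactly the same case analysis as in the proof of Theorem~\ref{thm:igl}, but exploit the single feature that distinguishes the projective setting: any two coplanar lines in $\mathbb{P}^3$ meet. This means every set of coplanar lines is automatically a clique of $G$, which eliminates the troublesome parallel-class argument that forced the weaker exponent $1/3$ in the affine case and lets the coplanar threshold be pushed all the way up to $\sqrt{n}$.

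Concretely, let $s$ be the maximum number of lines of $L$ lying on a common plane or on a common regulus, and let $\ell$ be the maximum number through a common point. I would split into four cases. First, if some regulus carries at least $\sqrt{n}$ lines of $L$, one of its two rulings yields at least $\sqrt{n}/2$ pairwise skew lines, an independent set of size $\Omega(\sqrt{n})$. Second, if some plane contains at least $\sqrt{n}$ lines of $L$, then since any two coplanar lines in $\mathbb{P}^3$ meet, these lines form a clique of size $\sqrt{n}$. Third, if some point of $\mathbb{P}^3$ lies on at least $\sqrt{n}$ lines of $L$, these concurrent lines form a clique. If none of these happens, then $s,\ell<\sqrt{n}$, and Proposition~\ref{pr:gk2} bounds the number of edges of $G$ by $O(n^{3/2}\log \ell+ns+n\ell)=O(n^{3/2}\log n)$. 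Turán's theorem (Theorem~\ref{thm:turan}) then yields
\[
\alpha(G)\;\gtrsim\;\frac{n^{2}}{n^{3/2}\log n}\;=\;\frac{\sqrt{n}}{\log n},
\]
which is exactly the target bound.

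The only genuine obstacle is ensuring that the Guth--Katz incidence bounds behind Proposition~\ref{pr:gk2} apply in $\mathbb{P}^3$ rather than $\R^3$. This is essentially free: one can either invoke the projective version of the Guth--Katz theorem directly, or pick a generic affine chart of $\mathbb{P}^3$ in which the (finitely many) problematic parallelism relations are avoided, so that the affine and projective intersection graphs on $L$ coincide. Note that once this is checked, the projective setting is actually \emph{easier} than the affine one handled in Theorem~\ref{thm:igl}, because the clique hypotheses produced by Cases~2 and~3 are met at the threshold $\sqrt{n}$ rather than $n^{2/3}$, and no parallel-class subcase is needed.
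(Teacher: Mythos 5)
Your proposal is correct and is essentially the argument the paper intends: the paper states Theorem~\ref{thm:projlineint} as "directly obtained" from Proposition~\ref{pr:gk2} and Lemma~\ref{lem:iglines3}, using exactly your observation that in $\mathbb{P}^3$ coplanar (and concurrent) lines form cliques, so the thresholds $s,\ell<\sqrt{n}$ plus Tur\'an give $\alpha(G)\gtrsim \sqrt{n}/\log n$. Your extra care about transferring the Guth--Katz bounds via a generic affine chart (so no two lines of $L$ meet at infinity) is a reasonable way to justify the step the paper leaves implicit.
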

Hence intersection graphs of lines in the projective plane satisfy the Erd\H{o}s-Hajnal property with exponent roughly $1/2$.

\subsection{Independent Sets of Lines for $t=3$}

We now consider the case in which $\calP$ is the set of lines in $\R^3$, $\calQ=\R^3$ and $t=3$. 
This can be seen as a kind of three-dimensional version of the dual of the result of Payne and Wood (Theorem~\ref{thm:gp} with $d=2$).
\begin{theorem}
\label{thm:iglines3b}
Consider a collection $L$ of $n$ lines in $\R^3$, such that at most $s$ lie in a plane, with $s \leq n / \log n$.
Then there exists a point incident to $\sqrt{n}$ lines, or a subset of $\Omega (\sqrt{n})$ lines 
such that at most two intersect in one point.
\end{theorem}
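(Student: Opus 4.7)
The plan is to follow the same template as the proof of Theorem~\ref{thm:gpR3}, this time using the Guth--Katz bound on points incident to many lines (Theorem~\ref{th:gk21}) instead of Theorem~\ref{thm:PkR3}. Here $\calP$ is the set of lines in $\R^3$, $\calQ=\R^3$, and a degenerate $3$-subset is a triple of lines meeting at a common point. The desired conclusion has the standard shape: we either find an ``incidence-heavy'' configuration (a single point stabbed by $\sqrt n$ lines), or we get a large independent set in the resulting $3$-uniform hypergraph $H$.

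First I would dispense with the clique case by contradiction: assume that no point is incident to more than $\ell := \sqrt n$ lines, so that Theorem~\ref{th:gk21} applies with parameters $s\le n/\log n$ and $k\ge 3$, giving
\[
P_{\ge k} \lesssim \frac{n^{3/2}}{k^2} + \frac{ns}{k^3} + \frac{n}{k}.
\]
Next I would apply Lemma~\ref{lem:nitoedges} with $t=3$ to each of the three terms separately. The exponents $a\in\{2,3,1\}$ fall into the three regimes of the lemma: the first term ($a=2<t$) contributes $O(n^{3/2}\ell)$, the second ($a=3=t$) contributes $O(ns\log\ell)$, and the third ($a=1<t$) contributes $O(n\ell^2)$. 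Substituting $\ell=\sqrt n$ and $s\le n/\log n$, every one of these is $O(n^2)$; in particular the $\log\ell$ factor in the middle term is exactly absorbed by the $1/\log n$ factor in $s$. Thus the number of hyperedges of $H$ satisfies $m\lesssim n^2$.

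Finally I would invoke Spencer's hypergraph Tur\'an bound (Theorem~\ref{thm:spencer}) with $t=3$, which gives
\[
\alpha(H) \gtrsim \frac{n}{(m/n)^{1/2}} \gtrsim \frac{n}{\sqrt{n}} = \sqrt{n},
\]
producing a subset of $\Omega(\sqrt n)$ lines no three of which are concurrent, as required.

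The only non-trivial point is the balancing of terms: the cutoff $s \le n/\log n$ is precisely what is needed to kill the logarithm arising from the $t=a$ regime of Lemma~\ref{lem:nitoedges} applied to the $ns/k^3$ term, so that the three contributions equalize at $n^2$ and the final application of Spencer's theorem yields the clean exponent $1/2$ without a logarithmic loss. This is analogous to how the same hypothesis was used in Theorem~\ref{thm:gpR3}, and indeed the overall structure mirrors that proof, with the role of Theorem~\ref{thm:PkR3} (points on lines) played here by Theorem~\ref{th:gk21} (lines through points) under the line-plane duality inherent in the statement.
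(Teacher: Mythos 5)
Your proposal is correct and follows essentially the same route as the paper: assume no point meets $\sqrt n$ lines, bound the number of concurrent triples via Theorem~\ref{th:gk21} and Lemma~\ref{lem:nitoedges} as $m\lesssim \ell n^{3/2}+ns\log\ell+n\ell^2\lesssim n^2$, and apply Spencer's bound (Theorem~\ref{thm:spencer}) to get an independent set of size $\Omega(\sqrt n)$. Your observation that the hypothesis $s\le n/\log n$ is exactly what absorbs the logarithm from the $t=a$ regime matches the role it plays in the paper's argument.
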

\begin{proof}
We let $\ell$ be the largest number of lines intersecting in one point, and suppose $\ell < \sqrt{n}$.
Applying Lemma~\ref{lem:nitoedges} and Theorem~\ref{th:gk21}, we get that the number of triples sharing a point is
at most $$ m \lesssim \ell n^{3/2} + ns \log \ell +n \ell^2 \lesssim n^2 .$$
Then by Theorem~\ref{thm:spencer} we have an independent set of size $\Omega(\sqrt{n})$.
\end{proof}

If the above theorem is stated with dependence on $\ell$, we get $\Omega(n^{3/4}/\sqrt{\ell})$.
If $s$ is allowed to be as large as $n$, we are back in the dual of general position subset selection, and we get $\Omega(\sqrt{n/ \log n})$, the same as Theorem~\ref{thm:gp}.

\section {Stabbing lines in $\R^3$}
\label{sec:lineslinesR3}

Three lines in $\R^3$ are typically intersected by a fourth line, except in certain degenerate cases.
Thus it makes sense to study configurations of lines in $\R^3$, and to consider a set of $4$ or more lines degenerate if all its elements are intersected by another line. 
Here we provide a result for $6$-tuples of lines. 

We define a 6-tuple of lines to be degenerate if all six lines are intersected (or ``stabbed'') by a single line in $\R^3$. 
We call this line a \emph{stabbing line} for the $6$-tuple of lines.
So in our framework this is the setting in which both $\cal P$ and $\cal Q$ are the set of lines in $\R^3$, and $t=6$. 

We make use of the Pl\"ucker coordinates and coefficients for lines in $\R^3$, which are a common tool for dealing with incidences between lines, 
see e.g.~Sharir~\cite{Sharir}. 
Let $a=(a_0:a_1:a_2:a_3), \
b=(b_0:b_1:b_2:b_3)$ be two points on a line $\ell$, given in
projective coordinates. By definition, the Pl\"ucker coordinates of
$\ell$ are given by
$$(\pi_{01}: \pi_{02}: \pi_{12}: \pi_{03}: \pi_{13}: \pi_{23}) \in
\mathbb P^5,$$ where $\pi_{ij}=a_i b_j - a_j b_i$ for $0\le i< j \le 3$.
Similarly, the Pl\"ucker coefficients of $\ell$ are given by
$$(\pi_{23}: -\pi_{13}: \pi_{03}: \pi_{12}: -\pi_{02}: \pi_{01}) \in
\mathbb P^5,$$ i.e., these are the Pl\"ucker coordinates written in
reverse order with two signs flipped. The important property is that
two lines $\ell_1$ and $\ell_2$ are incident if and only if the
Pl\"ucker coordinates of $\ell_1$ lie on the hyperplane defined by
the Pl\"ucker coefficients of $\ell_2$ and vice versa. Therefore, we
define $\tilde {\cal P}$, and $\tilde {\cal Q}$ to be the points in
$\mathbb P^5$ defined by the Pl\"ucker coordinates of the lines in
$L$, and the hyperplanes defined by the Pl\"ucker coefficients of
the lines in $\R^3$, respectively. The incidence relation between
points in $\tilde {\cal P}$ and hyperplanes in $\tilde {\cal Q}$ is
the standard incidence relation between points and hyperplanes. The
integer $t$ is set to 6, and a 6-tuple of points in $\tilde
{\cal P}$ is degenerate whenever there is a hyperplane in $\tilde
{\cal Q}$ which is incident to all six points in the 6-tuple.

We prove the following Ramsey-type result for stabbing lines in $\R^3$.
\begin {theorem}
  \label{thm:main3}
  Let $L$ be a set of $n$ lines in $\R^3$. Then
either there is a subset of $\sqrt n$ lines of $L$ that are all
stabbed by one line, or there is a subset of
$\Omega\left(\left(n/\log n\right)^{1/5}\right)$ lines of $L$ such
that no $6$-subset is stabbed by one line.
\end {theorem}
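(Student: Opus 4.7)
My approach is to lift the lines to points in $\mathbb{P}^5$ via Plücker coordinates, which converts the problem into a Ramsey-type question about points and hyperplanes in $\R^5$, and then apply Theorem~\ref{thm:gp} in dimension $d=5$.

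Concretely, to each line $\ell \in L$ I associate its Plücker coordinate vector $\mathrm{Pl}(\ell) \in \mathbb{P}^5$, giving a set $\tilde P$ of $n$ points. To each line $m$ in $\R^3$, I associate the \emph{stabbing hyperplane} $H_m \subset \mathbb{P}^5$ defined by the Plücker coefficients of $m$. The defining incidence property of the Plücker embedding is $\ell \cap m \neq \emptyset \iff \mathrm{Pl}(\ell) \in H_m$, so a $6$-subset of $L$ has a common stabber if and only if the corresponding six points of $\tilde P$ lie on a common stabbing hyperplane.

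Next, I would apply Theorem~\ref{thm:gp} with $d=5$ to $\tilde P$, obtaining either (a) $\sqrt n$ cohyperplanar Plücker points, or (b) $\Omega((n/\log n)^{1/5})$ Plücker points with no six on a common hyperplane. Outcome (b) immediately yields the claimed independent set: since no six of these points lie on any hyperplane, they in particular do not lie on any stabbing hyperplane, so the corresponding $\Omega((n/\log n)^{1/5})$ lines of $L$ contain no $6$-subset with a common stabber.

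For outcome (a), the technical subtlety---and the main obstacle of the proof---is that the hyperplane guaranteed by Theorem~\ref{thm:gp} need not be of the stabbing form $H_m$ for some real line $m$. The cleanest way I see to circumvent this is to not invoke Theorem~\ref{thm:gp} as a black box but rather to rerun its proof with ``hyperplane'' replaced by ``stabbing hyperplane'' throughout; this is admissible because its proof uses only the Elekes-T\'oth point-hyperplane incidence bound in $\R^5$ (which is an upper bound valid for any subfamily of hyperplanes), Lemma~\ref{lem:nitoedges} with $t=6$ and $\ell = \sqrt n$, and Spencer's theorem (Theorem~\ref{thm:spencer}). When the refined dichotomy falls on the clique side, it produces a stabbing hyperplane $H_m$ containing $\sqrt n$ points of $\tilde P$, i.e.\ a line $m$ of $\R^3$ that stabs $\sqrt n$ lines of $L$; on the independent-set side it produces, by Spencer's theorem applied to a hypergraph with at most $O(n^5 \log n)$ degenerate $6$-tuples, an independent set of size $\Omega((n/\log n)^{1/5})$, exactly as claimed.
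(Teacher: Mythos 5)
Your proposal is correct and is essentially the paper's own proof: the paper lifts the lines to Plücker points and stabbing hyperplanes in $\mathbb{P}^5$ and then proves exactly your ``relativized'' dichotomy as Theorem~\ref{th:main1}, deduced from Lemma~\ref{le:mainle} (Appendix~\ref{app:cardproof}), which is the Cardinal et al.\ tuple-counting lemma for an \emph{arbitrary} family of hyperplanes such as the stabbing hyperplanes, followed by Spencer's theorem just as you describe. Your justification for why the rerun of Theorem~\ref{thm:gp} goes through is slightly compressed --- the Elekes--T\'oth bound only controls $\gamma$-degenerate hyperplanes, so the relativized proof still needs the induction on dimension, generic projections, and the separate treatment of $(d+1)$-tuples concentrated on $(d-2)$-flats --- but every one of those steps is an upper bound that survives restriction to a subfamily, which is precisely what the appendix verifies, so your argument is sound and matches the paper's route.
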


Theorem~\ref{thm:main3} is an immediate consequence of the following generalisation of Theorem~\ref{thm:gp}. 
The difference is that the set of hyperplanes $\cal H$ is arbitrary instead of being the set of all hyperplanes in $\R^d$.
\begin {theorem}
\label{th:main1} Let $\cal H$ be a set of hyperplanes in $\R^d$.
Then, every set of $n$ points in $\R^d$ with at most $\ell$ points on any hyperplane in $\cal H$, where $\ell=O(n^{1/2})$, contains a
subset of $\Omega\left(\left(n/\log \ell\right)^{1/d}\right)$ points so
that every hyperplane in $\cal H$ contains at most $d$ of these
points.
\end {theorem}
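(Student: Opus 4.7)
The plan is to extend the approach used to prove Theorem~\ref{thm:gp} in~\cite{PW13,CTW14}: combine a point-hyperplane incidence bound with Lemma~\ref{lem:nitoedges} to count degenerate $(d+1)$-tuples, then invoke Spencer's Theorem (Theorem~\ref{thm:spencer}) to extract a large independent set. The relevant $(d+1)$-uniform hypergraph has vertex set $P$ and hyperedges equal to the $(d+1)$-tuples of $P$ lying on a common hyperplane of $\mathcal{H}$, so an independent set is precisely a subset of $P$ in which every hyperplane of $\mathcal{H}$ contains at most $d$ points.

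First I would bound $P_{\geq k}$, the number of hyperplanes in $\mathcal{H}$ incident to at least $k$ points of $P$. By the Elekes-T\'oth point-hyperplane incidence bound~\cite{ET05}, under a suitable non-degeneracy condition the number of $k$-rich hyperplanes in $\R^d$ is $O(n^d/k^{d+1})$, and since $\mathcal{H}$ is only a subfamily of all hyperplanes, restricting to $\mathcal{H}$ can only decrease $P_{\geq k}$, so the same bound applies. Plugging $g(n)=n^d$ and $a=d+1$ into Lemma~\ref{lem:nitoedges} with $t=d+1$ places us in the $t=a$ case of the lemma, so the number $m$ of degenerate $(d+1)$-tuples satisfies $m\lesssim n^d\log\ell$. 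Applying Theorem~\ref{thm:spencer} with this $m$ and $t=d+1$ then yields
\[
\alpha \gtrsim \frac{n}{(m/n)^{1/d}} \gtrsim \frac{n}{(n^{d-1}\log\ell)^{1/d}} = \left(\frac{n}{\log\ell}\right)^{1/d},
\]
which is exactly the claimed bound; the hypothesis $\ell=O(n^{1/2})$ guarantees $m\gtrsim n/t$, putting us in the non-trivial regime of Spencer's Theorem.

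The main obstacle is the non-degeneracy hypothesis built into the Elekes-T\'oth bound: the sharp form $O(n^d/k^{d+1})$ requires that no lower-dimensional flat carries too many points, whereas we only control cohyperplanarity with respect to $\mathcal{H}$. The natural remedy is induction on $d$: if some affine $(d-1)$-flat contains an unusually large fraction of $P$, recurse inside it after a generic projection to $\R^{d-1}$ (under which the hypothesis $\ell=O(n^{1/2})$ is preserved); otherwise the configuration is sufficiently non-degenerate for Elekes-T\'oth to apply directly. The base case $d=2$ reduces to the planar Payne-Wood argument~\cite{PW13} driven by Szemer\'edi-Trotter. One should also verify that the secondary error terms of Elekes-T\'oth, such as $n^{d-1}/k^{d-1}$, do not overtake the main contribution after Lemma~\ref{lem:nitoedges}; with $a=d-1<t=d+1$ the lemma turns this into a contribution of order $n^{d-1}\ell^2$, and the hypothesis $\ell^2=O(n)$ keeps this term at most $n^d$, absorbed by the main bound $n^d\log\ell$.
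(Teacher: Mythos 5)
Your overall skeleton (count degenerate $(d+1)$-tuples, aim for $m\lesssim n^d\log\ell$, then apply Theorem~\ref{thm:spencer}) is the same as the paper's, and your Spencer computation and the absorption of the secondary term $n^{d-1}/k^{d-1}$ using $\ell^2=O(n)$ are fine. The gap is in the step you defer to a remedy: the Elekes--T\'oth bound (Theorem~\ref{thmET}) does not bound the number of all $k$-rich hyperplanes under any global non-degeneracy assumption on $P$; it counts only the hyperplanes that are individually $\gamma$-degenerate, i.e.\ those in which no $(d-2)$-flat carries more than a $\gamma$ fraction of that hyperplane's points. This is a per-hyperplane condition, and no hypothesis of the form ``no $(d-1)$-flat contains an unusually large fraction of $P$'' can restore it: even for a very spread-out $P$, the family $\cal H$ may consist largely of hyperplanes each containing about $\ell$ points almost all of which lie on a single $(d-2)$-flat (e.g.\ many $(d-2)$-flats each carrying $\approx\ell$ points, with hyperplanes of $\cal H$ through them). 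Such hyperplanes escape Theorem~\ref{thmET} entirely, yet the $(d+1)$-tuples they carry must be counted, so your claim that ``restricting to $\cal H$'' gives $P_{\geq k}\lesssim n^d/k^{d+1}$, fed into Lemma~\ref{lem:nitoedges}, is not established. Your proposed dichotomy and recursion ``inside'' a rich $(d-1)$-flat therefore attacks the wrong degeneracy and does not close this.

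What is actually needed (and what the paper's Lemma~\ref{le:mainle} does) is a decomposition of the $(d+1)$-tuples into three types: tuples lying in a $(d-2)$-flat contained in some hyperplane of $\cal H$; tuples spanning a $\gamma$-degenerate hyperplane of $\cal H$ (only here does Theorem~\ref{thmET} apply, exactly as in your calculation); and tuples spanning a non-$\gamma$-degenerate hyperplane of $\cal H$, which are charged to the $(d-2)$-flat responsible, with a per-flat count of $O(nk^d)$ using $k>\gamma(r+k)$. The first and third types are then handled by induction on $d$ after a \emph{generic projection of the whole point set} to $\R^{d-1}$, taking as the new hyperplane family the projections of the relevant $(d-2)$-flats; this is precisely why the statement must be proved for an \emph{arbitrary} family $\cal H$ (each such flat still contains at most $\ell$ points, but the projected flats are not all hyperplanes of $\R^{d-1}$). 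Your sketch contains neither the charging argument for the non-degenerate-in-ET-sense hyperplanes nor the projection-to-an-arbitrary-family induction, and these are the heart of the proof rather than routine verifications.
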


Theorem~\ref{th:main1}, with $d=5$, applied to the points and hyperplanes given by the Pl\"ucker coordinates and coefficients, implies Theorem~\ref{thm:main3}.
Theorem~\ref{th:main1} follows from the following generalized version of Lemma 4.5 of Cardinal et al.~\cite{CTW14}.
\begin {lemma}
\label {le:mainle} Fix $d \ge 2$ and a set $\cal H$ of hyperplanes in $\R^d$. Let $P$ be a set of $n$ points in
$\R^d$ with no more than $l$ points in a hyperplane in $\cal H$, for some $l=O(n^{1/2})$. Then, the number of $(d+1)$-tuples in $P$ that lie
in a hyperplane in $\cal H$ is $O(n^d \log l)$.
\end {lemma}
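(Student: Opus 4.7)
The plan is to combine a point-hyperplane incidence bound with Lemma~\ref{lem:nitoedges} applied at its borderline case $t = a$, which is precisely the regime where a logarithmic factor naturally appears. The incidence bound I would invoke is the one due to Elekes and T\'oth~\cite{ET05}: provided no too large concentration of the points of $P$ occurs on a $(d-2)$-flat, the number $H_{\ge k}$ of hyperplanes of $\R^d$ incident to at least $k$ points of $P$ satisfies $H_{\ge k} \lesssim n^d / k^{d+1}$. Restricting attention to hyperplanes in $\cal H$ can only decrease this count, so the same upper bound holds for the $k$-rich hyperplanes of $\cal H$; this is in fact the sole place where the restricted family $\cal H$ enters the argument.

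First I would feed this estimate into Lemma~\ref{lem:nitoedges} with $t = d+1$, $g(n) = n^d$, and $a = d+1$. This is exactly the borderline case $t = a$, and the lemma immediately gives a contribution of $O(n^d \log \ell)$ to the sum $\sum_{H \in \cal H} \binom{|H \cap P|}{d+1}$. Every $(d+1)$-subset of $P$ that lies in some hyperplane of $\cal H$ contributes at least one to this sum (namely through any hyperplane of $\cal H$ that contains it), so the number of such degenerate $(d+1)$-tuples is itself $O(n^d \log \ell)$, which is precisely the claim of the lemma.

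The hard part will be disposing of the non-degeneracy hypothesis required by Elekes--T\'oth, namely isolating the hyperplanes on which a constant fraction of the points cluster on a $(d-2)$-flat. I would handle these ``saturated'' hyperplanes by induction on the dimension $d$, mirroring the proof of Lemma~4.5 in~\cite{CTW14}: any rich $(d-2)$-flat is contained in the $(d-1)$-dimensional hyperplane $H$ responsible for the saturation, and the $(d+1)$-tuples lying in such an $H$ can be controlled by applying the inductive hypothesis inside $H$ with the restricted family $\{H' \cap H : H' \in \cal H\}$, which still satisfies the bound of at most $\ell$ points per hyperplane. The assumption $\ell = O(n^{1/2})$ is precisely what is needed to absorb the secondary terms produced at each level of the induction into the leading $n^d \log \ell$. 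The base case $d = 2$ is the Szemer\'edi--Trotter theorem~\cite{ST83}, which applied via Lemma~\ref{lem:nitoedges} at its borderline $t = a = 3$ yields exactly $O(n^2 \log \ell)$, closing the induction.
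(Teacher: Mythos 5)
Your first half is sound and matches the paper's strategy for the easy part: the Elekes--T\'oth bound (Theorem~\ref{thmET}) fed into Lemma~\ref{lem:nitoedges} at the borderline case $t=a=d+1$ handles all $(d+1)$-tuples lying in a $\gamma$-degenerate hyperplane of $\mathcal{H}$, and the base case $d=2$ via Szemer\'edi--Trotter is exactly as in the paper. One small imprecision: Theorem~\ref{thmET} has a second term $n^{d-1}/k^{d-1}$, whose contribution to the sum is of order $\ell^2 n^{d-1}$; this is where the hypothesis $\ell=O(n^{1/2})$ is actually consumed in that case, which you acknowledge only in passing.

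The genuine gap is in the part you yourself call the hard part: the saturated (non-$\gamma$-degenerate) hyperplanes. The mechanism you propose --- apply the inductive hypothesis inside such a hyperplane $H$ to the restricted family $\{H'\cap H: H'\in\mathcal{H}\}$ --- fails for three reasons. First, the inductive statement in dimension $d-1$ requires that no hyperplane of the restricted family contain more than $O(m^{1/2})$ of the $m=|P\cap H|\le\ell$ points; but $H$ is saturated precisely because some $(d-2)$-flat contains more than a $\gamma$-fraction of $P\cap H$, so the hypothesis is violated for exactly the hyperplanes you need. Second, the inductive statement counts $d$-tuples lying in $(d-2)$-flats of $H$, whereas the tuples you must bound span $H$ and need not lie in any $(d-2)$-flat, so even a valid application would not count them. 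Third, you have no bound on the number of saturated hyperplanes, so a per-hyperplane estimate summed over an uncounted family cannot yield $O(n^d\log\ell)$. The paper closes this with two ingredients absent from your sketch: (i) each tuple spanning a saturated $H$ is charged to the rich $(d-2)$-flat $L\subset H$; since the hyperplanes of $\mathcal{H}$ through a fixed $L$ partition $P\setminus L$ (so $\sum_r n_r r\le n$) and $r\lesssim k$ by saturation, the tuples charged to an $L$ with $k$ points number $O(nk^d)$; and (ii) the rich $(d-2)$-flats are counted globally, not inside one hyperplane, by projecting all of $P$ generically to $\R^{d-1}$ (Lemma~\ref{le:tec}) and applying the induction hypothesis to the projected set with the projected flats as the new hyperplane family, giving $\sum_k s_k\binom{k}{d}\lesssim n^{d-1}\log\ell$; the same global projection argument also handles the tuples entirely contained in a $(d-2)$-flat of a hyperplane of $\mathcal{H}$ (the paper's Type 1), which your proposal never counts at all. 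Without these steps the argument does not close.
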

The difference between this lemma and the original version in~\cite{CTW14} is that the set of hyperplanes $\cal H$ is arbitrary, rather than being the set of all hyperplanes.  
The proof is similar to that of Cardinal et al., and is given in Appendix~\ref{app:cardproof}.

The following result provides a simple upper bound.
\begin{theorem}
For every constant integer $t\geq4$, there exists an arrangement $L$ of $n$ lines in $\R^3$ such that there is no subset of more than $O(\sqrt n)$ lines that are all stabbed by one line, nor any subset of more than $O(\sqrt n)$ lines with no $t$ stabbed by one line.
\end{theorem}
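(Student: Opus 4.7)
The plan is to construct $L$ as a union of line families lying on generic reguli in $\R^3$. Set $k = \lceil \sqrt{n}\rceil$ and $m=\lceil n/k\rceil$, and fix $k$ reguli $R_1,\ldots,R_k$ in sufficiently general position. On each regulus $R_i$, fix one of its two rulings and sample $m$ lines from it, giving a set $L_i$ of pairwise skew lines; then set $L = L_1\cup\cdots\cup L_k$, which has $n$ lines after trivial adjustments. This construction is motivated by the observation that within a regulus, one ruling contributes many lines that are all simultaneously stabbed by every line of the opposite ruling, exactly the kind of controlled degeneracy that lets us match both required bounds.

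For the first bound I would check that any line $\ell\subset\R^3$ stabs at most $O(\sqrt{n})$ lines of $L$. Fix $i$; either $\ell\not\subset R_i$, in which case $|\ell\cap R_i|\leq 2$ and $\ell$ stabs at most two lines of $L_i$ (one through each intersection point); or $\ell$ lies on a ruling of $R_i$, in which case $\ell$ is either skew to every line of $L_i$ (if in the chosen ruling) or stabs all $m$ lines of $L_i$ at once (if in the opposite ruling). Because generic reguli share no common ruling line, $\ell$ lies on at most one $R_i$, giving a total stab count of at most $m + 2(k-1) = O(\sqrt{n})$.

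For the second bound I would use pigeonhole. Suppose $L'\subseteq L$ contains no $t$-subset stabbed by a single line. Any $t$ lines inside one $L_i$ lie in a single ruling of $R_i$, so they are all stabbed by every line of the opposite ruling, contradicting the hypothesis. Hence $L'$ contains at most $t-1$ lines from each $L_i$, giving $|L'|\leq (t-1)k = O(\sqrt{n})$, with a hidden constant depending only on the constant $t$.

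The main technical step, and essentially the only non-trivial one, is justifying the genericity. I need: (i) no line of $\R^3$ lies simultaneously on two distinct reguli $R_i,R_j$ — two quadrics meet in a curve of degree at most four, which generically contains no line as a component; and (ii) the $m$-line samples on different reguli do not accidentally coincide. Each is a Zariski-open condition on the parameter space of reguli and ruling coordinates, so a short dimension count guarantees a simultaneously valid choice. This is where I expect most of the care to be needed, although the argument itself is routine.
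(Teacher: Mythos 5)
Your proposal is correct, but it uses a genuinely different construction from the paper. The paper takes $\sqrt n$ parallel planes, each carrying $\sqrt n$ lines with no three concurrent and no two parallel: within a plane any $t$ lines are stabbed by a generic line of that plane (giving the at most $(t-1)\sqrt n$ bound), and a stabbing line either lies in one of the planes (so the stabbed subset is confined to that plane) or meets each plane in at most one point and hence stabs at most two lines per plane. Your version replaces the planes by $\sqrt n$ reguli, each contributing $\sqrt n$ lines of one ruling; the ``universal stabber'' inside a group is any line of the opposite ruling, and the cross-group bound comes from B\'ezout (a line not contained in a quadric meets it in at most two points, and each point of a regulus lies on at most one line of the chosen ruling), giving the same $m+2(k-1)=O(\sqrt n)$ count. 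Both arguments share the same pigeonhole skeleton, but the paper's is more elementary and needs no genericity beyond elementary conditions, while yours needs the condition that no line lies on two of the reguli. That condition, which you handle by a Zariski-openness/dimension-count sketch, can in fact be discharged trivially: take pairwise disjoint reguli, e.g.\ the concentric hyperboloids $x^2+y^2-z^2=c_i$ for distinct $c_i>0$, so no line can lie on two of them and your counting goes through verbatim. A small cosmetic point: if the stabbing line is itself a member of the subset (or of some $L_i$), your case analysis still applies up to an additive $1$, so the asymptotics are unaffected.
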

\begin{proof}
Construct $L$ as follows: pick $\sqrt n$ parallel planes, each containing $\sqrt n$ lines, with no three intersecting and no two parallel.
Consider a subset stabbed by one line. Either it has three coplanar lines; then it must be fully contained in one of the planes and contains at most $\sqrt n$ lines;
or it has no three coplanar lines, hence contains at most two lines from each plane, and has at most $2\sqrt n$ lines.
Now consider a subset such that no $t$ lines are stabbed by one. Then it contains at most $t-1$ lines from each plane, and has at most $(t-1)\sqrt n$ lines.
\end{proof}

\section{Lines and reguli in $\R^3$}
\label{sec:linesreguliR3}

Consider the case in which $\calP$ is the class of lines in $\R^3$, $\calQ$ is the class of reguli, and $t=4$.
Let $P$ be a set of $n$ lines, and assume that the lines in $P$ are pairwise skew.
Every triple of lines in $P$ therefore determines a single regulus, and we may consider the set of all reguli determined by $P$.
We consider the containment relation rather than intersection -- we are interested in $4$-tuples that all lie in the same regulus.
In order to prove our result, we first reformulate previously known incidence bounds between points and curves in the plane.

\subsection{General position with respect to algebraic curves}

We first consider the case where $\calP = \R^2$ and $\calQ$ is a family of algebraic curves of bounded degree.
We define the number of degrees of freedom of a family of algebraic curves $\cal C$ to be the minimum value $s$ such that 
for any $s$ points in $\R^2$ there are at most $c$ curves passing through all of them, for some constant $c$.
Moreover, $\cal C$ has multiplicity type $r$ if any two curves in $\cal C$ intersect in at most $r$ points.
We consider a set of points to be in general position with respect to $\cal C$ when no $s+1$ points lie on a curve in $\cal C$.

It is possible to extract Ramsey-type statements for this situation directly from Theorem~\ref{thm:gp} via linearisation. 
For example, let us consider the special case of circles, where $s=3$. Given a set
of points in the plane, we can lift it onto a paraboloid in $\R^3$ in such a way that a subset of the original set lies on a circle
(possibly degenerated into a line)
if and only if the corresponding lifted points lie on a hyperplane in $\R^3$. By applying Theorem~\ref{thm:gp} on the lifted set, we can show that 
there exists a subset of $\sqrt{n}$ points incident to a circle, or a subset of $\Omega ((n/\log n)^{1/3})$ points such that at 
most three of them lie on a circle. We show how we can improve on this.

In order to apply our technique, we need Szemer\'edi-Trotter-type bounds on the number of incidences 
between points and curves. This has been studied by Pach and Sharir~\cite{PS98}.
\begin{theorem}[\cite{PS98}]
\label{thm:curveST}
Let $P$ be a set of $n$ points in the plane and let $\mathcal{C}$ be a set of $m$ bounded degree plane algebraic curves with $s$
degrees of freedom and multiplicity type $r$. Then the number of point-curve incidences is at most
$$
I(P, \mathcal{C}) \leq C(r,s) \left( n^{s/(2s-1)}m^{(2s-2)/(2s-1)} + n + m \right)
$$
where $C(r,s)$ is a constant depending only on $r$ and $s$.
\end{theorem}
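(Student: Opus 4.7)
The plan is to adapt the crossing-lemma proof of the Szemer\'edi--Trotter theorem, combined with a Clarkson--Shor cutting argument to handle the parameters $r$ (multiplicity type) and $s$ (degrees of freedom). The starting observation is a weak ``average'' incidence bound coming directly from the $s$-degrees-of-freedom hypothesis: writing $k_C$ for the number of incidences on a curve $C$, the hypothesis gives $\sum_{C} \binom{k_C}{s} \leq c \binom{n}{s}$, and convexity via Jensen's inequality then forces the average of the $k_C$ to satisfy $\bar k \lesssim n/m^{1/s}$. Summing over curves yields the weak bound $I(P,\mathcal{C}) \lesssim n m^{(s-1)/s} + m$. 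This bound is strong when $m$ is much smaller than $n$, but too weak on its own in the balanced regime.

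To improve it I would use the cutting approach. Take a random sample $R \subseteq \mathcal{C}$ of size $r$, form the vertical decomposition of its arrangement into $O(r^2)$ cells of bounded combinatorial complexity, and argue by standard Clarkson--Shor estimates that in expectation each cell contains $O(n/r^2)$ points of $P$ and is crossed by $O(m/r)$ curves of $\mathcal{C}$. Applying the weak bound inside each cell gives $I_{\mathrm{cell}} \lesssim (n/r^2)(m/r)^{(s-1)/s}$, and summing over the $O(r^2)$ cells yields a contribution of $n(m/r)^{(s-1)/s}$. Incidences on cell boundaries contribute an extra $O(rm)$, since each curve of $\mathcal{C}$ crosses $O(r)$ cells of the sample arrangement. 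Balancing these two quantities by choosing $r \asymp n^{s/(2s-1)} m^{-1/(2s-1)}$ produces the advertised bound $n^{s/(2s-1)} m^{(2s-2)/(2s-1)}$, and the additive $n + m$ terms account for isolated points and for the low-incidence curves that were set aside at the start.

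The main obstacle will be establishing the Clarkson--Shor random sampling estimates for families of algebraic curves of bounded degree rather than for lines. One needs the vertical decomposition of an arrangement of $r$ such curves to consist of $O(r^2)$ cells of constant description complexity, and the complement-sample curves to cross the cells in the expected Clarkson--Shor manner. Both facts follow from the bounded-degree and multiplicity-type hypotheses, but verifying them rigorously, in particular the cell-boundary bookkeeping during the summation, is the most delicate part of the argument. As a sanity check, when $s=2$ the cutting step can be replaced by a direct crossing-lemma argument: two points then determine $O(1)$ curves, the induced incidence graph has bounded edge multiplicity, and the bound recovers the original Szemer\'edi--Trotter exponent $n^{2/3} m^{2/3}$.
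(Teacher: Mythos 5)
This theorem is not proved in the paper at all: it is quoted from Pach and Sharir, and the paper only adds the remark that bounded-degree algebraic curves may be cut into $O(1)$ simple pieces so that the original statement for simple curves applies. Your proposal is therefore a genuinely different route from anything in the paper, and also from the original Pach--Sharir argument, which runs Sz\'ekely's crossing-lemma method on a multigraph drawn along curve arcs (the multiplicity type $r$ entering to control parallel edges). What you sketch is the other standard proof scheme, in the spirit of Clarkson--Edelsbrunner--Guibas--Sharir--Welzl: a K\H{o}v\'ari--S\'os--Tur\'an-type weak bound $I\lesssim nm^{(s-1)/s}+m$ from the $s$ degrees of freedom, bootstrapped through a $(1/\rho)$-cutting of the sample arrangement, and your balancing $\rho\asymp n^{s/(2s-1)}m^{-1/(2s-1)}$ does reproduce the exponent $n^{s/(2s-1)}m^{(2s-2)/(2s-1)}$. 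The crossing-lemma proof buys a shorter argument with no sampling machinery; your cutting proof buys modularity (the weak bound is applied locally, and the same skeleton gives weighted and cell-restricted variants), at the cost of verifying cuttings for bounded-degree algebraic curves, which is where the bounded degree and multiplicity type are really used.

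Two caveats you should repair when writing this out. First, your boundary bookkeeping leans on the claim that each curve crosses $O(\rho)$ cells of the vertical decomposition of a $\rho$-element sample; this is false in general (a single non-sample curve can cross $\Theta(\rho^2)$ vertical walls erected from a cluster of vertices). What is true, and all you need, is the aggregate bound from the cutting property itself: $O(\rho^2)$ cells each crossed by $O(m/\rho)$ curves gives total cell--curve crossings $O(\rho m)$, and points lying on cell boundaries or on sample curves must be assigned to closed cells or handled via the multiplicity type. Second, the balanced choice of $\rho$ is only admissible when $1\le\rho\le m$; in the extreme regimes ($m\gtrsim n^s$ or $n\gtrsim m^2$) you must fall back on the weak bound and on the dual K\H{o}v\'ari--S\'os--Tur\'an estimate $I\lesssim m\sqrt{rn}+n$ coming from the multiplicity type, which is what justifies the additive $n+m$ terms. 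Also avoid reusing the letter $r$ for both the sample size and the multiplicity type. With these points fixed, the outline is a correct alternative proof of the cited theorem.
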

Pach and Sharir proved Theorem~\ref{thm:curveST} for simple curves with $s$ degrees of freedom and multiplicity type $r$. 
It is well known that one may replace simple curves with bounded degree algebraic curves, since such curves may be cut into a constant number of simple pieces.
Note that a set of bounded degree algebraic curves has constant multiplicity type if no two curves share a common component. 
Wang et al.~\cite{WangEtal} recently proved another result for incidences between points and algebraic curves, though for our purposes Theorem~\ref{thm:curveST} is stronger.

\begin{theorem}
\label{thm:gpcurve}
Consider a family $\mathcal{C}$ of bounded degree algebraic curves in $\R^2$ with constant multiplicity type and $s$ degrees of freedom, for some $s>2$. 
Then in any set of $n$ points in $\R^2$,
there exists a subset of $\Omega (n^{1-1/s})$ points incident to a single curve of $\mathcal{C}$, or
a subset of $\Omega (n^{1/s})$ points such that at most $s$ of them lie on a curve of $\mathcal{C}$.
\end{theorem}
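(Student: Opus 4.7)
The plan is to follow the same incidence-bound-plus-hypergraph-Tur\'an recipe used earlier in the paper, applied to the hypergraph whose vertices are the $n$ points and whose hyperedges are the degenerate $(s+1)$-subsets, i.e.\ those $(s+1)$-subsets lying on a common curve of $\mathcal{C}$. The integer $t=s+1$ is forced by the definition of the independence we want (no $s+1$ collinear with a curve).

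First I would deal with the clique side. Let $\ell$ be the maximum number of points of the given set that lie on a single curve of $\mathcal{C}$. If $\ell \gtrsim n^{1-1/s}$, the corresponding curve already supplies the claimed clique and we are done. So I may assume $\ell = O(n^{1-1/s})$ and work towards producing a large independent set.

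Next I would convert the Pach--Sharir incidence bound (Theorem~\ref{thm:curveST}) into a bound on $P_{\geq k}$, the number of curves of $\mathcal{C}$ incident to at least $k$ of the $n$ points. Since $P_{\geq k}\cdot k \leq I(P,\mathcal{C})$, plugging the Pach--Sharir bound with $m=P_{\geq k}$ and solving for $m$ yields, for $k$ exceeding an absolute constant,
\[
P_{\geq k} \;\lesssim\; \frac{n^{s}}{k^{2s-1}} + \frac{n}{k}.
\]
Now I would feed each of these two terms into Lemma~\ref{lem:nitoedges} with $t=s+1$. For the first term ($a=2s-1$, $g(n)=n^{s}$), the hypothesis $s>2$ gives $t=s+1<2s-1=a$, so the contribution to the number $m$ of degenerate $(s+1)$-subsets is $O(n^{s})$. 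For the second term ($a=1$, $g(n)=n$), we are in the regime $t>a$, giving a contribution of $O(n\,\ell^{s})$; since $\ell \lesssim n^{1-1/s}$, this is again $O(n^{s})$. Altogether, $m \lesssim n^{s}$.

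Finally I would apply Spencer's hypergraph Tur\'an bound (Theorem~\ref{thm:spencer}) to the $(s+1)$-uniform hypergraph on $n$ vertices with $O(n^{s})$ edges, obtaining
\[
\alpha(H) \;\gtrsim\; \frac{n}{(m/n)^{1/s}} \;\gtrsim\; \frac{n}{(n^{s-1})^{1/s}} \;=\; n^{1/s},
\]
which is the desired independent set. The main subtlety rather than obstacle is verifying the strict inequality $s+1<2s-1$, which is exactly where the hypothesis $s>2$ is used: at $s=2$ one falls into the $t=a$ regime of Lemma~\ref{lem:nitoedges} and acquires a logarithmic loss, matching the planar three-in-a-line result recalled in Theorem~\ref{thm:gp}. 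Beyond that, the only care needed is in converting the Pach--Sharir incidence count into a tail bound on $P_{\geq k}$, which is a standard dyadic manipulation and requires separating the three additive regimes of the incidence bound.
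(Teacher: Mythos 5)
Your proposal is correct and follows essentially the same route as the paper: convert Pach--Sharir into the tail bound $P_{\geq k}\lesssim n^{s}/k^{2s-1}+n/k$, feed it into Lemma~\ref{lem:nitoedges} with $t=s+1$ (using $s>2$ to get $t<2s-1$), bound the degenerate $(s+1)$-subsets by $O(n^{s})$ under $\ell\lesssim n^{1-1/s}$, and finish with Spencer's theorem. The only cosmetic difference is that the paper also records the trivial bound $P_{\geq k}\lesssim n^{s}$ for the constantly many small values of $k$ (immediate from the $s$ degrees of freedom), which your "$k$ above an absolute constant" phrasing leaves implicit but which changes nothing in the outcome.
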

\begin{proof}
Set $t=s+1$ and count the number of degenerate $t$-subsets.
We denote by $P_{\geq k}$ the number of curves of $\mathcal{C}$ containing at least $k$ points of $P$.
A direct corollary of Theorem~\ref{thm:curveST} is that, for values of $k$ larger than some constant,
$$
P_{\geq k} \lesssim \frac{n^s}{k^{2s-1}} + \frac nk.
$$
On the other hand, for smaller values of $k$, the trivial bound $P_{\geq k} \lesssim n^s$ holds since for any $s$ points, there are at most a constant number of curves passing through all of them.
Suppose now that no curve contains more than $\ell\lesssim n^{1-1/s}$ points of $P$. 
Since $s>2$, it follows that $t< 2s-1$.
Using Lemma~\ref{lem:nitoedges}, we deduce that the number of degenerate $t$-subsets is
$$
m\lesssim n^s + n\ell^s \lesssim n^s.
$$
Thus by Theorem~\ref{thm:spencer} there exists an independent set of size at least
$$
\frac{t-1}{t^{t/(t-1)}} \frac n {(m/n)^{1/(t-1)}} =  \Omega(n^{1/s}).
$$
\end{proof}

As an example, we can instantiate the result as follows for circles in the plane.
\begin{corollary}
\label{thm:gpcircle}
In any set of $n$ points in $\R^2$, there exists a subset of $\Omega (n^{2/3})$ points incident to a circle, or
a subset of $\Omega (n^{1/3})$ points such that no four of them lie on a circle.
\end{corollary}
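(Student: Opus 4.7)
The plan is to observe that this corollary is essentially a direct instantiation of Theorem~\ref{thm:gpcurve} in the special case where the family $\mathcal{C}$ is the family of circles in $\R^2$, so the real work lies in verifying that circles fit the hypotheses of that theorem with the parameter $s=3$.

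First I would argue that circles form a family of bounded-degree algebraic curves: each circle is the zero set of a quadratic polynomial, so the degree is $2$. Next I would check the multiplicity type: two distinct circles cannot share a positive-dimensional component (since each is irreducible as a real algebraic set, modulo the usual degenerate cases one absorbs by allowing lines as ``circles of infinite radius''), and by B\'ezout any two distinct circles meet in at most two points, giving constant multiplicity type $r=2$. Finally, the family has $s=3$ degrees of freedom in the sense of Pach--Sharir, because any three non-collinear points determine a unique circle and therefore no $4$ generic points lie on a common circle.

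With those three facts in hand, I would apply Theorem~\ref{thm:gpcurve} with $s=3$. The first alternative of that theorem produces a subset of $\Omega(n^{1-1/3}) = \Omega(n^{2/3})$ points lying on a common circle, and the second alternative produces a subset of $\Omega(n^{1/3})$ points such that at most $s = 3$ of them lie on a common circle, i.e.\ no $4$ of them are concyclic. This is precisely the statement of the corollary.

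The only mild subtlety I expect is the handling of the degenerate case in which many of the ``circles'' through triples of the given points are actually straight lines (three collinear points). This can be dealt with either by extending $\mathcal{C}$ to include lines (they also have $s \le 3$ degrees of freedom and constant multiplicity type with circles, two distinct lines meeting in at most one point and a line meeting a circle in at most two points), or by first applying Theorem~\ref{thm:gp} for $d=2$ to dispatch configurations with many collinear points; in either case the bounds $\Omega(n^{2/3})$ and $\Omega(n^{1/3})$ are preserved. Beyond this bookkeeping, no new ideas are needed.
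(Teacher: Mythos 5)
Your proof is correct and matches the paper's approach: the corollary is stated there as a direct instantiation of Theorem~\ref{thm:gpcurve} with $s=3$ for the family of circles, exactly as you do. Your extra discussion of collinear triples is harmless but unnecessary, since collinear points simply lie on no circle of the family, so the degrees-of-freedom and multiplicity-type checks you give already suffice.
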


Using the standard point-line duality, Theorem~\ref{thm:gp} states that for every arrangement of $n$ lines in $\R^2$, 
either there exists a point contained in $\sqrt{n}$ lines, or there exists a set of $\Omega((n/\log n)^{1/2})$ lines inducing a 
simple arrangement, that is, such that no point belongs to more than two lines. 
We provide a similar dual version of Theorem~\ref{thm:gpcurve}.
This corresponds to the case where $\calP$ is a family of algebraic curves with $s$ degrees of freedom, $\calQ = \R^2$, and $t=3$.
As mentioned before, the case $t=2$, or intersection graphs, has been studied previously~\cite{FP08,FP12}.
The proof is very similar to that of Theorem~\ref{thm:gpcurve} and omitted.
\begin{theorem}
\label{thm:dualgpcurve}
Consider a family $\mathcal{C}$ of bounded degree algebraic curves in $\R^2$ with constant multiplicity type and $s$ degrees of freedom, for some $s>2$. 
Then in any arrangement $C$ of $m$ such curves, there exists a subset of $\Omega (m^{1-1/s})$ curves intersecting in one point, or
a subset of $\Omega (m^{1/s})$ curves inducing a simple subarrangement, that is, such that at most two intersect in one point.
\end{theorem}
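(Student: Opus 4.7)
The plan is to mirror the proof of Theorem~\ref{thm:gpcurve} in the dual setting. Here $\calP = \mathcal{C}$ is the family of curves, $\calQ = \R^2$, and a $t$-subset of curves is degenerate when some point is incident to all of them. Setting $t=3$, a clique corresponds to a set of concurrent curves, and an independent set is precisely a set of curves whose induced subarrangement is simple.

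The first step is to invert the Pach--Sharir bound (Theorem~\ref{thm:curveST}): letting $Q_{\geq k}$ denote the number of points of $\R^2$ lying on at least $k$ curves of $\mathcal{C}$, the double-counting estimate $k \cdot Q_{\geq k} \leq I(P_{\geq k}, \mathcal{C})$ with $|P_{\geq k}| = Q_{\geq k}$ and $|\mathcal{C}|=m$ produces, for all $k$ larger than some constant,
$$
Q_{\geq k} \lesssim \frac{m^{2}}{k^{(2s-1)/(s-1)}} + \frac{m}{k}.
$$
For smaller $k$ the constant multiplicity type assumption gives the trivial bound $Q_{\geq k} \lesssim m^{2}$, consistent with the first term.

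The second step applies Lemma~\ref{lem:nitoedges} to each of the two terms with $t=3$. For the first term the relevant exponent is $a = (2s-1)/(s-1)$; a direct check shows $a < 3$ exactly when $s>2$, so we fall in the regime $t > a$ and the lemma produces $m^{2}\ell^{(s-2)/(s-1)}$, crucially avoiding the borderline case $t=a$ which would introduce a $\log\ell$ factor. For the second term $a=1$ gives $m\ell^{2}$. Assuming no point is incident to more than $\ell$ curves with $\ell \lesssim m^{1-1/s}$ (otherwise a concurrent clique of the claimed size already exists), a short substitution shows that both contributions are $O(m^{3-2/s})$, so the total number of degenerate 3-subsets is at most $O(m^{3-2/s})$.

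The third step applies Spencer's Theorem (Theorem~\ref{thm:spencer}) to the resulting 3-uniform hypergraph on $m$ vertices with $O(m^{3-2/s})$ edges, yielding an independent set of size $\Omega\bigl(m / (m^{2-2/s})^{1/2}\bigr) = \Omega(m^{1/s})$, matching the claimed bound. The only real obstacle is the exponent bookkeeping: correctly inverting the Pach--Sharir inequality for ``points lying on many curves'' rather than the more standard ``curves containing many points'', and verifying that the strict hypothesis $s>2$ keeps us out of the $t=a$ case of Lemma~\ref{lem:nitoedges}, where a logarithmic loss would otherwise degrade the final exponent.
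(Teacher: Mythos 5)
Your proposal is correct and is exactly the argument the paper intends: the paper omits this proof as ``very similar'' to that of Theorem~\ref{thm:gpcurve}, and your dualization -- inverting the Pach--Sharir bound to get $Q_{\geq k}\lesssim m^2/k^{(2s-1)/(s-1)}+m/k$ for the number of points on at least $k$ curves, applying Lemma~\ref{lem:nitoedges} with $t=3$ (where $s>2$ ensures $a=(2s-1)/(s-1)<3$, avoiding the logarithmic case), and finishing with Theorem~\ref{thm:spencer} -- is precisely that argument. The exponent bookkeeping ($m^2\ell^{(s-2)/(s-1)}+m\ell^2\lesssim m^{3-2/s}$ when $\ell\lesssim m^{1-1/s}$, yielding an independent set of size $\Omega(m^{1/s})$) is accurate.
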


\subsection{Ramsey-type results for lines and reguli in $\R^3$}

We now come back to our original problem in which $\calP$ is the class of lines in $\R^3$, $\calQ$ is the class of reguli, and $t=4$. 
Here we restrict the finite arrangement $P\subset \calP$ to be pairwise skew, that is, pairwise nonintersecting and nonparallel.
We also consider the containment relation, that is, $\ell \in P$ is incident to $R\in Q$ if it is fully contained in it.

Recall that a regulus can be defined as a quadratic ruled surface which is the locus of all lines that are incident to three lines in general position. This surface is a \emph{doubly ruled} surface, that is, every point on a regulus is incident to precisely two lines fully contained in it. There are only two kinds of reguli, both of which are quadrics -- hyperbolic paraboloids and hyperboloids of one sheet; see for instance Sharir and Solomon~\cite{SS3dv} for more details.

\begin{theorem}\label{4inreg}
Let $L$ be a set of $n$ pairwise skew lines in $\R^3$. Then there are $\Omega (n^{2/3})$ lines on a regulus, or $\Omega (n^{1/3})$ lines, no 4-subset of which lie on a regulus.  
\end{theorem}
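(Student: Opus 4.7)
The exponents $n^{2/3}$ and $n^{1/3}$ in the statement match exactly the bounds $n^{1-1/s}$ and $n^{1/s}$ of Theorem~\ref{thm:gpcurve} for $s=3$; this reflects the fact that three pairwise skew lines in $\R^3$ determine a unique regulus. The plan is therefore to realise the line-regulus containment relation as a point-curve incidence relation in the plane with $s=3$ degrees of freedom, and then to invoke Theorem~\ref{thm:gpcurve} directly.

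To carry out the reduction, encode each line $\ell\in L$ by its Pl\"ucker point $\tilde\ell$ in a fixed affine chart of $\mathbb{P}^5$. Under this embedding, the set of lines lying on a regulus $R$ corresponds to a conic on the Klein quadric. Since the lines of $L$ are pairwise skew and any two lines from different rulings of $R$ meet, the lines of $L$ lying on a common regulus $R$ all belong to a single ruling of $R$ and hence correspond to points on one specific conic in $\mathbb{P}^5$. Thus reguli give rise to a $3$-parameter family of bounded-degree algebraic curves in $\R^5$. Next, apply a generic linear projection $\pi\colon\R^5\to\R^2$. For sufficiently generic $\pi$ the images $\pi(\tilde\ell)$ are $n$ distinct points in $\R^2$; distinct reguli-conics project to distinct bounded-degree plane curves; and a $4$-subset of lines of $L$ lies on a common regulus if and only if the corresponding $4$ projected points lie on a common curve of the projected family. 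Moreover, any two distinct curves in this family meet in $O(1)$ points: two distinct quadric surfaces in $\R^3$ intersect in an algebraic curve of degree at most $4$ and hence share at most $O(1)$ lines, and B\'ezout's theorem for bounded-degree plane curves controls the extra intersections introduced by the projection.

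Theorem~\ref{thm:gpcurve} applied with $s=3$ then yields either $\Omega(n^{2/3})$ image points on a single curve of the projected family, which pulls back to $\Omega(n^{2/3})$ lines of $L$ on a common regulus, or $\Omega(n^{1/3})$ image points with no $4$-subset on any curve of the family, pulling back to $\Omega(n^{1/3})$ lines of $L$ with no $4$-subset on a common regulus. The main technical obstacle is the genericity verification for $\pi$: one must choose a single linear projection that simultaneously achieves injectivity on the finite set $\{\tilde\ell\}$, injectivity on the infinite family of reguli-conics, bounded multiplicity type for the image family, and preservation of the number of degrees of freedom. This is a standard algebraic genericity argument, but it is the only step where real care is needed before Theorem~\ref{thm:gpcurve} can be applied verbatim.
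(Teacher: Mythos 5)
Your overall strategy---encode each line as a point of an algebraic variety so that rulings of reguli become bounded-degree curves, project generically to the plane, and apply Theorem~\ref{thm:gpcurve} with $s=3$---is exactly the paper's (the paper simply parametrizes a line by its intersections with the planes $z=0$ and $z=1$, giving points in $\R^4$, instead of Pl\"ucker points in $\mathbb{P}^5$; that difference is immaterial). The genuine gap is in the step you defer to ``a standard algebraic genericity argument'': you ask for one linear map $\pi\colon\R^5\to\R^2$ that is injective on the \emph{infinite} family of all ruling conics and preserves the degrees of freedom. No such projection exists. The rulings form a $9$-parameter family (quadrics in $\mathbb{P}^3$ form a $\mathbb{P}^9$), whereas plane conics form only a $5$-parameter family, so injectivity on the whole family is impossible by a dimension count. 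The degrees-of-freedom property also does not transfer: in Theorem~\ref{thm:gpcurve} it quantifies over \emph{arbitrary} triples of points of $\R^2$, not just images of Pl\"ucker points of lines of $L$. Each fibre $\pi^{-1}(q_i)$ is a $3$-flat meeting the Klein quadric in a $2$-parameter family of lines, so imposing that a ruling conic meet three such fibres is only three conditions on a $9$-parameter family, leaving a (roughly) $6$-parameter family of rulings whose conics project through $q_1,q_2,q_3$. Consequently either infinitely many \emph{distinct} curves of the projected family pass through three generic points, so the $s=3$ hypothesis of Theorem~\ref{thm:gpcurve} (needed both for the trivial bound and for Theorem~\ref{thm:curveST}) fails outright, or else large families of distinct rulings share a single image curve, in which case the clique alternative ``$\Omega(n^{2/3})$ projected points on one curve'' no longer yields $\Omega(n^{2/3})$ lines on one regulus, and the ``if'' direction of your claimed equivalence for $4$-subsets breaks as well. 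Either way the plan cannot be completed as stated.

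The missing idea is the restriction the paper makes \emph{before} projecting: replace the full family by the finite set of ruling curves spanned by triples of lines of $L$ (at most $\binom{n}{3}$ of them; since the lines are pairwise skew, each triple lies in at most one ruling of one regulus). For a finite configuration of points and bounded-degree curves a generic projection does exist with all the properties you want: point images stay distinct, the curves stay distinct, no new point--curve incidences or triple points are created, and new crossings are simple and finite in number. The upstairs fact ``three points lie on at most one curve of the family'' then transfers to ``at most two curves of the projected family pass through any three points of $\R^2$'', which is precisely the $s=3$ degrees-of-freedom hypothesis, and your application of Theorem~\ref{thm:gpcurve} goes through verbatim, recovering Theorem~\ref{4inreg}. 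With that modification your argument coincides with the paper's proof; without it, the genericity wish-list you rely on is unsatisfiable.
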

\begin{proof}
We map the lines in $L$ to a set $P$ of points in $\R^4$. 
This can be done for instance by associating with each line the $x$- and $y$-coordinates of the two points of intersection with the planes $z=0$ and $z=1$. (We may assume no line is parallel to these planes). 
Under this mapping, a ruling of a regulus corresponds to an algebraic curve in $\R^4$.
Let $C$ be the finite set of all curves corresponding to a ruling of a regulus determined by three lines in $L$. 
Note that \emph{any} triple of points in $\R^4$ is contained in at most one such curve, because three lines in $\R^3$ lie in at most one ruling of one regulus. (A pair of parallel or intersecting lines are not contained in a ruling of any regulus, even though they are contained in many reguli). 

Apply a generic projection $\pi$ from $\R^4$ to $\R^2$, and consider the arrangement of points $P'=\pi(P)$ together with the set of projected curves $C'= \pi(C)$.
Such a projection preserves the incidences between points and curves in $\R^4$, and only creates new intersections between pairs of curves (i.e.~`simple' crossings).
Three or more curves in $C'$ intersect in a point if and only if their preimages in $C$ intersect in a point.

The set of curves $C'$ has three degrees of freedom, since for any three points in $\R^2$ there are at most two curves passing through all of them.
Otherwise, if three curves pass through three points, the corresponding curves in $C$ also intersect in three points in $\R^4$, a contradiction. 

Moreover, the curves in $C'$ are algebraic of bounded degree, do not share common components, and thus have constant multiplicity type.
Applying Theorem~\ref{thm:gpcurve} with $s=3$, we obtain that there are $\Omega (n^{2/3})$ points of $\pi(P)$ on one curve, or $\Omega (n^{1/3})$ points of $\pi(P)$, no four of which lie on a curve. The result follows.  
\end{proof}

The bounds can be shown to be tight in the following sense.
\begin{theorem}
There exists a set $P$ of $n$ pairwise skew lines in $\R^3$ such that there is no subset of more than $O(n^{2/3})$ lines on a regulus, and no more than $O(n^{1/3})$ lines such that no 4-subset lie on a regulus.  
\end{theorem}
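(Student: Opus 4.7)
The plan is to mimic the commented-out construction for plane curves, adapted to lines and reguli. Specifically, I would take a generic family of $n^{1/3}$ reguli $R_1,\dots,R_{n^{1/3}}$ in $\R^3$, and for each $R_i$ fix one of its two rulings and select $n^{2/3}$ lines from that ruling, yielding a total of $n$ lines. Lines from the same selected ruling are automatically pairwise skew, so the only thing to arrange regarding skewness is that lines from different reguli are also skew. This I would handle by a genericity argument: for fixed reguli $R_i \ne R_j$ and a fixed line $\ell \subset R_i$ from the chosen ruling, $\ell$ meets the quadric $R_j$ in at most two points and hence meets at most four lines of $R_j$; so by perturbing the chosen ruling lines to avoid the finitely many ``bad'' positions per pair of reguli, I obtain a pairwise skew configuration.

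For the first claimed bound I would argue that every regulus $R$ contains only $O(n^{2/3})$ lines of $P$. If $R$ is one of the chosen reguli $R_i$, it contains its own $n^{2/3}$ lines, and for every $j \ne i$ the intersection $R_i \cap R_j$ is an algebraic curve of bounded degree, so at most $O(1)$ lines of $R_j$ lie in $R_i$; summing over $j$ gives $O(n^{1/3})$ additional lines, hence $O(n^{2/3})$ in total. If $R$ is not one of the chosen reguli, the same bounded-degree intersection argument gives at most $O(1)$ lines from each $R_i$, for a total of $O(n^{1/3}) = O(n^{2/3})$.

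For the second bound I would use a simple pigeonhole. Since every line of $P$ lies on one of the $n^{1/3}$ chosen reguli (and, generically, only on that one), any subset of more than $3 n^{1/3}$ lines must contain $4$ lines lying on a common $R_i$, yielding a degenerate $4$-subset. Hence any subset of $P$ with no degenerate $4$-subset has size at most $3 n^{1/3} = O(n^{1/3})$.

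The main obstacle is the genericity/skewness step: while it is intuitively clear that a generic choice of reguli and ruling lines produces a pairwise skew configuration, one has to be careful about dimension counts for the space of reguli (parameterised, say, by triples of pairwise skew lines) and about ensuring that the exceptional loci have positive codimension in each relevant parameter space. A concrete alternative, such as using explicit one-sheeted hyperboloids of the form $\{x^2/a_i^2 + y^2/b_i^2 - z^2/c_i^2 = 1\}$ with suitably chosen parameters $a_i,b_i,c_i$, could make the verification entirely explicit and sidestep the genericity argument.
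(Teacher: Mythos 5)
Your construction and counting are essentially identical to the paper's proof: take $n^{1/3}$ reguli, put $n^{2/3}$ lines from one ruling on each, then bound lines on any regulus by $O(n^{2/3})$ (a non-chosen regulus meets each chosen one in a degree-$\le 4$ curve, hence $O(1)$ shared lines) and apply pigeonhole for the $3n^{1/3}$ bound on sets with no degenerate $4$-subset. The only difference is that you spell out the genericity argument for pairwise skewness and the $O(1)$-lines-per-pair-of-reguli count, which the paper leaves implicit; your proposal is correct.
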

\begin{proof}
The set $P$ is constructed as follows: take a set of $n^{1/3}$ distinct reguli, and for each regulus take $n^{2/3}$ lines in one of its rulings, giving $n$ pairwise skew lines. 
Consider a subset of $P$ contained in a regulus. Either it is one of the chosen reguli, and it contains at most $n^{2/3}$ lines, or it contains at most two lines from each regulus, and has size at most $2n^{1/3}$. On the other hand, consider a subset of lines with no four on a regulus. It can contain at most three lines from each chosen regulus, and therefore has size at most $3n^{1/3}$.
\end{proof}

\paragraph{Alternative proof.} Aronov et al.~\cite{AKS05} proved the following bound on the number of incidences between lines and reguli in 3-space. 

\begin{theorem}[Aronov et al.\cite{AKS05}]
\label{thm:linereginc}
Let $L$ be a set of $n$ lines in $\R^3$, and let $R$ be a set of $m$ reguli in $\R^3$. Then the number of incidences between the lines of $L$ and the reguli of $R$ is $O(n^{4/7} m^{17/21} + n^{2/3} m^{2/3} + m + n)$.
\end{theorem}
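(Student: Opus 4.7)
The plan is to convert the line-regulus incidence problem into a point-curve incidence problem in a higher-dimensional parameter space, and then apply a cutting-based decomposition combined with baseline incidence bounds for algebraic curves. The two nontrivial terms $n^{2/3}m^{2/3}$ and $n^{4/7}m^{17/21}$ have the shape typical of bounds obtained via the Clarkson--Shor cutting paradigm used with two different regimes, which strongly suggests this is the natural framework; the $m+n$ term absorbs the trivial contributions from cells or clusters that are essentially incidence-free.

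First I would parameterize each line in $\R^3$ as a point in $\R^4$, say via the four affine parameters $(a,b,c,d)$ of its representation $(at+b,\ ct+d,\ t)$, after a generic choice of coordinates to rule out the measure-zero family of exceptional lines (lines parallel to the $xy$-plane). A regulus, being a doubly ruled quadric, contains exactly two one-parameter families of lines, namely its two rulings. Each ruling lifts to an algebraic curve of constant degree in the line space $\R^4$, so the $m$ reguli contribute $2m$ algebraic curves of constant description complexity. The line-regulus incidence count in $\R^3$ then equals, up to a factor of two, the number of point-curve incidences between the $n$ lifted points and these $2m$ curves in $\R^4$. A key structural input is that two curves coming from distinct reguli meet in $O(1)$ points, since two distinct reguli share only a bounded number of lines within any single ruling; this is what keeps the arrangement of curves ``tame''.

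Next I would build a $(1/r)$-cutting of the arrangement of these $2m$ algebraic curves in $\R^4$. Inside each cell, crossed by $O(m/r)$ curves, I would apply a simpler Szemer\'edi--Trotter-style estimate for points and bounded-degree algebraic curves to bound the internal contribution; the cross-cell contribution is charged to the complexity of the cutting, i.e.~to the number of cells and to incidences lying on cell boundaries. Summing over cells and optimizing the parameter $r$ is what produces the two main terms: the symmetric Szemer\'edi--Trotter regime gives $n^{2/3}m^{2/3}$, while the asymmetric, higher-dimensional regime gives $n^{4/7}m^{17/21}$.

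The principal obstacle is the parameter optimization that yields the precise exponents $4/7$ and $17/21$. This requires sharp combinatorial-complexity bounds for arrangements of constant-degree algebraic curves in $\R^4$, and a careful match between those bounds and the in-cell incidence estimate at the optimal scale; this is the step where the genuinely algebraic-geometric input enters, whereas the rest of the argument is a routine application of Clarkson--Shor-style recurrences. Any looseness in the complexity bound for the arrangement, or in the in-cell estimate, directly worsens the final exponents, so making these bounds tight (using, e.g., the specific quadric structure of reguli rather than treating the curves as generic algebraic objects) is the crux of the proof.
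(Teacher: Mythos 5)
First, a point of reference: the paper does not prove this statement at all --- Theorem~\ref{thm:linereginc} is quoted as a black box from Aronov et al.~\cite{AKS05} and is used only in the sketched alternative proof of Theorem~\ref{4inreg}. There is therefore no internal proof to compare yours against; your proposal has to be judged as a reconstruction of the external result.

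Your opening reduction is the right one and matches the standard approach: parameterize lines by points of $\R^4$, observe that each ruling of a regulus becomes a constant-degree algebraic curve so that line--regulus incidences become point--curve incidences among $n$ points and $2m$ curves, with any two curves meeting in $O(1)$ points. From there, however, the proposal has two genuine gaps. The first is technical: a ``$(1/r)$-cutting of the arrangement of these curves in $\R^4$,'' with cells ``crossed by $O(m/r)$ curves,'' is not a workable object --- the curves have codimension $3$, they do not separate space, and the Clarkson--Shor cell-decomposition machinery you invoke is designed for codimension-one objects. One must first project generically to $\R^2$ (exactly as this paper does in its own proof of Theorem~\ref{4inreg}) and work with plane curves. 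The second gap is the decisive one: the entire quantitative content of the theorem is deferred to an unspecified ``parameter optimization.'' Since three pairwise skew lines determine a unique ruling, the (projected) curves have $3$ degrees of freedom, and the routine cutting/Pach--Sharir argument you describe yields only $O(n^{3/5}m^{4/5}+n^{2/3}m^{2/3}+n+m)$, which is strictly weaker than the stated $n^{4/7}m^{17/21}$ term throughout the relevant range $m\le n^3$ (compare the exponent sums $7/5$ and $29/21$). The improved exponents do not come from tuning a Clarkson--Shor recurrence; they come from the lens-cutting technique --- cutting the curve family into pseudo-segments after bounding the number of lenses it forms, in the style of Aronov--Sharir and Agarwal et al. --- which your sketch neither names nor replaces. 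Acknowledging that this step is ``the crux'' does not supply it, so as written the proposal proves at best a weaker bound, not the theorem as stated.
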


From this bound, one may derive an alternative proof of Theorem~\ref{4inreg}, of which we now give a brief sketch. First bound $P_{\geq k}$, defined as the number of reguli containing at least $k$ lines. From the above Theorem, we get $P_{\geq k}\lesssim n^3/k^{21/4}+n^2/k^3+n/k$. Then from Lemma~\ref{lem:nitoedges} we know that if no regulus contains more than $\ell$ lines, then the number of degenerate 4-tuples of lines is $m\lesssim n^3 + n^2\ell + n\ell^3$. Hence either $\ell$ is larger than $n^{2/3}$, or $m\lesssim n^3$ and from Theorem~\ref{thm:spencer} there exists an independent set of lines of size $\Omega (n^{1/3})$.

\acknowledgments
The authors wish to thank the reviewers for their comments, including those on earlier, preliminary versions of this paper.

\bibliographystyle{hplain}
\bibliography{independent_lines}
\appendix
\section{Proof of Lemma~\ref{le:mainle}}\label{app:cardproof} 

For the proof we need the following observation regarding generic projection maps.

\begin {lemma}
\label {le:tec} Let $P$ be a finite set of points in $\R^d$, and let
$\mathcal A$ be a finite set of $(d-2)$-flats in $\R^d$. 
Let $\pi$ be a generic projection from $\R^d$ to a hyperplane.
Then a point $p \in P$ lies on a $(d-2)$-flat $A \in \cal A$ if and only if $\pi(p) \in \pi(A)$.
\end {lemma}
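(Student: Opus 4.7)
The plan is to handle the two directions of the biconditional separately and establish the nontrivial one by a dimension count on the parameter space of projections. The forward direction is immediate: if $p\in A$ then $\pi(p)\in\pi(A)$ by definition of $\pi(A)$ as the image of $A$. So the content of the lemma is the converse, namely that for a generic projection $\pi$ and every pair $(p,A)\in P\times\mathcal A$ with $p\notin A$, one has $\pi(p)\notin\pi(A)$.

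To make ``generic'' precise, I would parameterize projections by their direction of projection: fix a target hyperplane $H$ and associate with each direction $v\in\mathbb P^{d-1}$ not parallel to $H$ the projection $\pi_v$ sending $x$ to the intersection of $H$ with the line through $x$ parallel to $v$. Under this model $\pi_v(p)=\pi_v(q)$ for distinct $p,q$ iff the line $pq$ has direction $v$, so for $p\notin A$ the bad event $\pi_v(p)\in\pi_v(A)$ is equivalent to the existence of $q\in A$ with the line $pq$ in direction $v$, i.e.\ to $v$ lying in
$$
B(p,A):=\{v\in\mathbb P^{d-1}:\text{the line through $p$ in direction $v$ meets $A$}\}.
$$

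The key step will be to show $\dim B(p,A)=d-2<\dim\mathbb P^{d-1}$. Writing $A=a_0+V$ for a $(d-2)$-dimensional linear subspace $V$, the condition $p+sv\in A$ for some scalar $s$ is equivalent (since $s=0$ is excluded by $p\notin A$) to $v$ lying in the projectivization of $\mathrm{span}(a_0-p)+V$. The hypothesis $p\notin A$ forces $a_0-p\notin V$, so this linear span has dimension $d-1$ and its projectivization is a $(d-2)$-flat in $\mathbb P^{d-1}$, confirming the bound.

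Finally, since $P$ and $\mathcal A$ are finite, the union of the sets $B(p,A)$ over all pairs with $p\notin A$, together with the $(d-2)$-dimensional set of directions parallel to $H$, is a finite union of proper projective subvarieties of $\mathbb P^{d-1}$, hence a proper closed set of measure zero. Any $v$ in the complement defines a projection $\pi_v$ that witnesses the lemma's conclusion for all pairs simultaneously. The one delicate point is the dimension count for $B(p,A)$; the remainder of the argument is routine genericity bookkeeping.
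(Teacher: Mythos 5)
Your proof is correct and rests on the same idea as the paper's: since $p\notin A$, the set $\{p\}\cup A$ spans a hyperplane, and a generic projection does not collapse it -- the paper states this directly via the dimension of $\pi(\mathrm{span}(\{p\}\cup A))$, while you make the genericity explicit by parameterizing projections by direction and bounding the dimension of the bad locus $B(p,A)$, which is precisely (contained in) the projectivized direction space of that hyperplane. This is the same argument with the routine genericity bookkeeping written out, so no substantive difference.
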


\begin {proof}
The forward implication is clear. For the other direction, suppose $p \notin A$.
Then the affine span of $\{p\} \cup A$ is a hyperplane, that is, it is $(d-1)$-dimensional.
By the genericity of $\pi$, the image $\pi(\mbox{span}(\{p\} \cup A))$ must also be $(d-1)$-dimensional, so $\pi(p) \notin \pi(A)$.
\end {proof}

We also need the following result of Elekes and T\'oth~\cite{ET05}.
Given a point set $P$, a hyperplane $h$ is said to be \emph{$\gamma$-degenerate} if at most $\gamma |P \cap h|$ points of $P \cap h$ lie on a $(d-2)$-flat.
\begin{theorem}\label{thmET}
For every $d\geq 3$ there exist constants $C_d>0$ and $\gamma_d>0$ such that for every set of $n$ points in $\R^d$, the number $h_{\geq k}$ of $\gamma_d$-degenerate hyperplanes containing at least $k$ points of $P$ is at most
$$ C_d \left(\frac {n^d} {k^{d+1}} + \frac {n^{d-1}} {k^{d-1}}  \right).$$
\end{theorem}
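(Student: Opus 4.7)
The final statement is the Elekes--T\'oth incidence bound, cited from~\cite{ET05}, so in this paper it would be invoked as a black box and the natural "proof" is simply the citation. I would not attempt a full reproof here, but here is the approach I would take if forced to reconstruct the argument.

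The plan is by induction on the dimension $d$, with the Szemer\'edi--Trotter theorem for points and lines in the plane as the combinatorial engine. For the base case $d=3$, the $\gamma_3$-degeneracy hypothesis forbids any single line on a heavy plane $h$ from containing more than a $\gamma_3$-fraction of the points of $P \cap h$. A direct application of Szemer\'edi--Trotter inside $h$ then bounds the number of collinear triples on $h$ by $O(k^2)$, so $\Omega(k^3)$ of the $\binom{k}{3}$ point-triples on $h$ actually span $h$ uniquely. A naive triple-counting would already give $h_{\geq k} \lesssim n^3/k^3$, but to sharpen this to the claimed $n^3/k^4 + n^2/k^2$ one needs a finer argument that exploits Szemer\'edi--Trotter at the level of incidences between lines and planes rather than just triples of points.

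For the inductive step $d-1 \to d$, the key observation is that on a $\gamma_d$-degenerate hyperplane $h \subset \R^d$ with $\geq k$ points, a random $(d-1)$-tuple of points of $P \cap h$ is affinely generic within $h$ with positive probability. Projecting $P \cap h$ onto a generic hyperplane inside $h$ (in the spirit of Lemma~\ref{le:tec} above) preserves incidences with $(d-2)$-flats, and the resulting point set inherits a $\gamma_{d-1}$-degenerate structure for a suitable constant $\gamma_{d-1}$ depending on $\gamma_d$ and $d$. Applying the inductive bound inside each heavy hyperplane, summing over such hyperplanes, and balancing against the total number of $(d+1)$-tuples yields the two stated terms.

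The hard part is the calibration of the degeneracy constant: one must choose the sequence $\gamma_3, \gamma_4, \ldots$ small enough that the inductive hypothesis continues to apply after projection, but not so small that the constant $C_d$ blows up uncontrollably. This is the step where the Elekes--T\'oth argument is genuinely nontrivial, and where I would expect most of the work to lie; the incidence-theoretic bookkeeping is otherwise a relatively routine combination of Szemer\'edi--Trotter with Cauchy--Schwarz-style double counting.
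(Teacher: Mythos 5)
You are right that the paper gives no proof of this statement: it is quoted directly from Elekes and T\'oth~\cite{ET05} and used as a black box, so the citation is exactly the paper's "proof" and your treatment matches it. Your speculative reconstruction of the Elekes--T\'oth argument goes beyond anything the paper contains and is not checkable against it, so the citation alone suffices here.
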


For convenience we restate Lemma~\ref{le:mainle}.
\newtheorem*{repmainle}{Lemma \ref{le:mainle}}
\begin{repmainle}
Fix $d \ge 2$ and a set $\cal H$ of hyperplanes in $\R^d$. Let $P$ be a set of $n$ points in
$\R^d$ with no more than $\ell$ points in a hyperplane in $\cal H$, for some $\ell=O(n^{1/2})$. Then, the number of $(d+1)$-tuples in $P$ that lie
in a hyperplane in $\cal H$ is $O(n^d \log \ell)$.
\end{repmainle}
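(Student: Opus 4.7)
The plan is to proceed by induction on $d$, mirroring the proof of Lemma~4.5 in Cardinal et al.~\cite{CTW14} but with an arbitrary set $\mathcal{H}$ of hyperplanes in place of all hyperplanes of $\R^d$. For the base case $d=2$, I would apply Szemer\'edi--Trotter to the lines of $\mathcal{H}$ to obtain $P_{\ge k} \lesssim n^2/k^3 + n/k$, then plug into Lemma~\ref{lem:nitoedges} with $t=3$: the first term falls in the regime $t=a=3$ and yields $O(n^2 \log \ell)$, while the second falls in $t>a$ and contributes $O(n\ell^2) = O(n^2)$ thanks to $\ell=O(n^{1/2})$. The two estimates add up to the required $O(n^2 \log \ell)$.

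For the inductive step with $d \ge 3$, I would partition the hyperplanes of $\mathcal{H}$ containing at least $d+1$ points of $P$ into the family $\mathcal{H}_1$ of $\gamma_d$-degenerate hyperplanes (with $\gamma_d$ from Theorem~\ref{thmET}) and its complement $\mathcal{H}_2$, each of whose members $h$ comes with a ``heavy'' $(d-2)$-flat $A_h \subset h$ carrying more than $\gamma_d|P \cap h|$ points. The contribution of $\mathcal{H}_1$ follows from Theorem~\ref{thmET} combined with Lemma~\ref{lem:nitoedges} at $t=d+1$: the two terms $n^d/k^{d+1}$ and $n^{d-1}/k^{d-1}$ contribute $O(n^d \log \ell)$ (the case $t=a$) and $O(n^{d-1}\ell^2) = O(n^d)$ respectively, yielding $O(n^d \log \ell)$ in total.

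For $\mathcal{H}_2$, I would take a generic projection $\pi \colon \R^d \to \R^{d-1}$. By Lemma~\ref{le:tec} each heavy flat $A_h$ maps to a hyperplane $\pi(A_h) \subset \R^{d-1}$ whose incidences with $\pi(P)$ coincide with the incidences of $A_h$ with $P$, and $|\pi(A_h) \cap \pi(P)| \le \ell$ since $A_h \subset h$. Thus $\mathcal{H}' := \{\pi(A_h) : h \in \mathcal{H}_2\}$ is an admissible family for the lemma in $\R^{d-1}$, and the induction hypothesis produces $O(n^{d-1} \log \ell)$ $d$-tuples of $\pi(P)$ on $\mathcal{H}'$. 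Each such $d$-tuple is, again by Lemma~\ref{le:tec}, the image of a unique $d$-subset of $P$ lying on some $A_h$, which one then extends by an additional point of $P$ to reach a $(d+1)$-tuple on a hyperplane of $\mathcal{H}_2$; multiplying by the at most $n$ choices for the extra point matches the target $n^d \log \ell$.

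The main obstacle is the careful translation of the $O(n^{d-1}\log\ell)$ bound in $\R^{d-1}$ into the claimed $O(n^d \log \ell)$ bound in $\R^d$ without double-counting, together with separately handling the ``residual'' $(d+1)$-tuples on some $h \in \mathcal{H}_2$ that meet the heavy flat $A_h$ in fewer than $d$ points. For the latter, such residual tuples must contain at least two points of $h \setminus A_h$, which forces those points to lie in a single hyperplane of the $1$-parameter pencil through $A_h$; they can therefore be absorbed by a further application of Theorem~\ref{thmET} to that pencil or by iterating the same dichotomy inside $h$. The delicate bookkeeping --- ensuring one does not overcount tuples arising from distinct members of $\mathcal{H}_2$ sharing a common heavy flat, and that the residual count is indeed absorbed into $O(n^d \log \ell)$ --- is the technical core of the argument. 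I expect it to go through essentially verbatim from~\cite{CTW14}, since restricting the allowed hyperplanes to $\mathcal{H}$ only decreases the quantities being counted.
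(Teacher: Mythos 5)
Your overall architecture (induction on $d$, Szemer\'edi--Trotter base case, splitting hyperplanes by $\gamma_d$-degeneracy, Elekes--T\'oth via Lemma~\ref{lem:nitoedges} for the degenerate ones, and a generic projection plus induction applied to the heavy $(d-2)$-flats) is the same as the paper's, and the $\mathcal{H}_1$ and base-case computations are correct. The gap is exactly where you locate it, but your proposed fixes for it do not work. The ``residual'' $(d+1)$-tuples on a non-degenerate $h\in\mathcal{H}_2$ (fewer than $d$ points on the heavy flat $A_h$, hence at least two points off it) cannot be absorbed ``by a further application of Theorem~\ref{thmET} to that pencil'': Theorem~\ref{thmET} is a statement about the $\gamma$-degenerate hyperplanes spanned by a point set, and the hyperplanes of $\mathcal{H}_2$ are precisely the ones that are \emph{not} $\gamma$-degenerate, so it says nothing about the members of the pencil through $A_h$. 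Nor is ``iterating the dichotomy inside $h$'' set up as an induction you can invoke. Moreover the residual is not negligible per hyperplane: a non-degenerate $h$ with $k$ points on $A_h$ and $r$ points off it can carry $\Theta(r^2k^{d-1})$ residual tuples, which is of the same order as all tuples on $h$, so some genuinely new saving is needed.

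The saving the paper uses (its Type~3 argument) rests on two facts you never state: (a) non-degeneracy forces $r=O(k)$ (since $k>\gamma(r+k)$), so the number of tuples on $h$ having a point off $A_h$ is $O(rk^d)$; and (b) a point of $P\setminus A_h$ lies on at most one hyperplane of the pencil through $A_h$, so summing over that pencil gives $\sum_r n_r r\le n$ and hence $O(nk^d)$ tuples assigned to the flat $A_h$. One then sums over the distinct heavy flats: the induction hypothesis applied to their generic projections (hyperplanes in $\R^{d-1}$ with at most $\ell$ points of $\pi(P)$) gives $\sum_A k_A^d\lesssim n^{d-1}\log\ell$, whence the Type~3 total is $O(n^d\log\ell)$. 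Note that once you argue this way you do not need your ``$d$ points on $A_h$ plus one extra point times $n$'' count at all for spanning tuples: the paper handles all tuples spanning a non-degenerate hyperplane in one stroke via (a) and (b), and treats separately (its Type~1) the tuples that fail to span, by projecting the $(d-2)$-flats spanned by $P$ inside hyperplanes of $\cal H$ and applying the induction hypothesis, with the extra factor $\ell\lesssim n^{1/2}$ absorbed since $\ell n^{d-1}\log\ell\lesssim n^d\log\ell$. Without (a) and (b) your residual bookkeeping cannot reach $O(n^d\log\ell)$, so as written the proposal has a genuine hole at its ``technical core.''
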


\begin{proof} 
The proof is an adaptation of the proof of Lemma 4.5 in Cardinal et al.~\cite{CTW14}.
It proceeds by induction on $d \ge 2$. The base case is $d=2$. We
wish to bound the number of triples of points of $P$, lying on a
line in $\cal H$. Let $h_k$ (resp., $h_{\ge k}$) denote the number
of lines of $\cal H$ containing exactly (resp., at least) $k$ points
of $P$. The number of triples of points lying on a line of $\cal H$
is

\begin {equation}
\begin {array}{ll}
\sum_{k=3}^\ell h_k \binom k 3 \le \sum_{k=3}^{\ell} k^2 h_{\ge k} \cr\cr
\lesssim \sum_{k=3}^{\ell} k^2\left(\frac {n^2} {k^3} + \frac n k\right)
\lesssim n^2 \log \ell + \ell^2 n \lesssim n^2 \log \ell,
\end {array}
\end {equation}
where $h_{\ge k} \lesssim \frac {n^2} {k^3} + \frac n k$ follows by the
Szemer\'edi-Trotter Theorem~\cite{ST83}.

We now consider the general case $d\ge 3$. Let $P$ be a set of $n$
points in $\R^d$, with no more than $\ell$ points in a hyperplane in
$\cal H$, where $\cal H$ is a given set of hyperplanes in $\R^d$,
and $\ell=O(n^{1/2})$.
Let $\gamma:= \gamma_d > 0$ be the constant specified in Theorem~\ref{thmET}.
We distinguish between the following three types of $(d+1)$-tuples:

\noindent {\bf Type 1: $(d+1)$-tuples of $P$ contained in a
$(d-2)$-flat in a hyperplane in $\cal H$.} 
Let $\cal F$ be the set of 
$(d-2)$-flats that are contained in some hyperplane in $\cal H$ and spanned by the points $P$.
Let $s_k$ denote the number of flats in $\cal F$ that contain exactly $k$ points of $P$.
We project $P$ onto a
$(d-1)$-flat $K$ via a generic projection $\pi$ to obtain a set of points
$P':=\pi(P)$ in $\R^{d-1}$.
Let $\cal H'$ be the set of hyperplanes $\pi(\Gamma)$ for each $\Gamma \in \cal F$.
By Lemma~\ref{le:tec}, $|P \cap \Gamma| = |P' \cap \pi(\Gamma)|$ for each $\Gamma \in \cal F$.
Thus $s_k$ is also the number of hyperplanes in $\cal H'$ containing $k$ points of $P'$.
Moreover, the hyperplanes in $\cal H'$ contain at most $\ell$ points of $P'$.

Applying the induction hypothesis on $P'$ with respect to $\cal H'$ 
we deduce that the number of $d$-tuples in $P'$ that lie in a
hyperplane in $\cal H'$ is
$$\sum_{k=d}^\ell s_k \binom k d \lesssim n^{d-1}\log \ell.$$
Therefore, the number of $(d+1)$-tuples of $P$ lying on a $(d-2)$-flat in $\cal F$ is
$$\sum_{k=d+1}^\ell s_k \binom k {d+1} \le \sum_{k=d+1}^\ell k s_k \binom k {d} \lesssim \ell n^{d-1} \log \ell \lesssim
n^d \log \ell.$$

\noindent {\bf Type 2: $(d+1)$-tuples of $P$ that span a
$\gamma$-degenerate hyperplane in $\cal H$.} 
Let $h_k$ denote the number of
$\gamma$-degenerate hyperplanes in $\cal H$ containing exactly $k$
points of $P$. 
Using Theorem~\ref{thmET}, we get

\begin {equation}
\begin {array}{ll}
\sum_{k=d+1}^\ell h_k \binom k {d+1} \le \sum_{k=d+1}^{\ell} k^d h_{\ge k}
\cr\cr \lesssim \sum_{k=d+1}^{\ell} k^d\left(\frac {n^d} {k^{d+1}} +
\frac {n^{d-1}} {k^{d-1}} \right) \lesssim n^d \log \ell + \ell^2 n^{d-1}
\lesssim n^d \log \ell .
\end {array}
\end {equation}

\noindent {\bf Type 3: $(d+1)$-tuples of $P$ that span a hyperplane
in $\cal H$ that is not $\gamma$-degenerate.} 
Recall that if a hyperplane $H$
spanned by $P$ is not $\gamma$-degenerate, then more than a $\gamma$
fraction of its points lie in some $(d-2)$-flat. 
Consider a $(d-2)$-flat $L$ 
containing exactly $k$ points of $P$. 
A point in $P \setminus L$ can be on at most one hyperplane containing $L$.
Let $n_r$
denote the number of hyperplanes in $\cal H$ containing $L$ and
exactly $r$ points of $P\setminus L$. Then $\sum_r n_r r \le n$, and by assumption on the
hyperplanes in $\cal H$, we have $r \le \ell$. 

We will assign each tuple of Type 3 to a $(d-2)$-flat that causes it to be Type 3. 
Fix a $(d-2)$-flat $L$ with $k$ points and consider a hyperplane $H \in \cal H$ that is not $\gamma$-degenerate because it contains $L$. 
That is, suppose $H$ contains $r+k$ points, and $k > \gamma(r+k)$, so $r< O(k)$.
All tuples that span H contain at least one point not in $L$. 
Hence the number of tuples that span $H$ is $O(rk^d)$. 
Assign these tuples to $L$. 
The total number of tuples of Type 3 that will be assigned to $L$ in this way is therefore at most
$$O \left( \sum_r n_r r k^d \right) \lesssim nk^d.$$

Let $\cal F$ be the set of $(d-2)$-flats that 
have at least one Type 3 tuple assigned to them.
Thus $\cal F$ is a finite set.
Let $s_k$ denote the number of flats in $\cal F$ that contain exactly $k$ points of $P$.
We project $P$ onto a
$(d-1)$-flat $K$ via a generic projection $\pi$ to obtain a set of points
$P':=\pi(P)$ in $\R^{d-1}$.
Let $\cal H'$ be the set of hyperplanes $\pi(\Gamma)$ for each $\Gamma \in \cal F$.
By Lemma~\ref{le:tec}, $|P \cap \Gamma| = |P' \cap \pi(\Gamma)|$ for each $\Gamma \in \cal F$.
Thus $s_k$ is also the number of hyperplanes in $\cal H'$ containing $k$ points of $P'$.
Moreover, the hyperplanes in $\cal H'$ contain at most $\ell$ points of $P'$.
Applying the induction hypothesis on $P'$ with respect to $\cal H'$ 
we deduce that the number of $d$-tuples in $P'$ that lie in a
hyperplane in $\cal H'$ is
$$\sum_{k=d}^\ell s_k \binom k d 
\lesssim n^{d-1}\log \ell.$$
Moreover, $\sum_{k=1}^{d-1} s_k k^d \lesssim n^{d-1}$.
Therefore, the number of $(d+1)$-tuples of Type 3 is at most
$$\sum_{k=1}^\ell s_k n k^d \le n\sum_{k=1}^\ell s_k k^d \lesssim n^d \log \ell.$$

Summing over all three cases, the proof is complete.
\end {proof}
\end{document}